\newtheorem{theorem}{Theorem}[section]
\newtheorem{lemma}[theorem]{Lemma}
\newtheorem{corollary}[theorem]{Corollary}
\newtheorem{proposition}[theorem]{Proposition}
\theoremstyle{definition}
\newtheorem{definition}{Definition}
\newtheorem{remark}{Remark}
\newcommand{\stkout}[1]{\ifmmode\text{\sout{\ensuremath{#1}}}\else\sout{#1}\fi}
\def\PG{\mathrm{PG}} 
\def\F{\mathbb{F}}
\def\Aut{\mathrm{Aut}}
\def\PGammaL{\mathrm{P}\Gamma\mathrm{L}}
\def\GammaL{\Gamma\mathrm{L}}
\def\PGL{\mathrm{PGL}} 
\def\GL{\mathrm{GL}}
\def\Fq{\mathbb{F}_q}
\def\Fn{\mathbb{F}_{q^n}}
\def\a{\alpha}
\def\b{\beta}
\def\Tr{\mathrm{Tr}}
\def\tr{\mathrm{tr}}
\def\EQ1{[EQUIVALENCE 1]}
\def\EQ2{[EQUIVALENCE 2]}
\def\V{\mathrm{V}}
\def\rank{\mathrm{rank}}
\def\End{\mathrm{End}}
\def\la{\langle}
\def\ra{\rangle}
\def\ran{\rangle_{q^n}}
\def\L{\mathcal{L}}
\def\C{\mathcal{C}}
\def\D{\mathcal{D}}
\def\H{\mathcal{H}}
\def\U{\mathcal{U}}
\def\E{\End_{\Fq}(\Fn)}
\newcommand{\pt}[1]{\langle #1 \rangle}
\title{Rank-metric codes, linear sets, and their duality}
\author{John Sheekey \and Geertrui Van de Voorde \thanks{This author is a postdoctoral fellow of the Research Foundation Flanders (FWO -- Vlaanderen).}}
\begin{document}
\maketitle
\begin{abstract} 
In this paper we investigate connections between linear sets and subspaces of linear maps. We give a geometric interpretation of the results of \cite[Section 5]{Sheekey} on linear sets on a projective line. We extend this to linear sets in arbitrary dimension, giving the connection between two constructions for linear sets defined in \cite{Lunardon}. Finally, we then exploit this connection by using the MacWilliams identities to obtain information about the possible weight distribution of a linear set of rank $n$ on a projective line $\PG(1,q^n)$.

\end{abstract}

{\bf Keywords:} MRD code, weight distribution, linear set, scattered with respect to hyperplanes


\section{Introduction}

Linear sets are important in finite geometry due to their usefulness in constructing and characterising geometrical objects, for example blocking sets and finite semifields; see \cite{olga} for an in-depth treatment of this subject. Scattered linear sets are of particular interest, we refer to \cite{LavScat} for a complete survey.


In this paper we investigate connections between linear sets and subspaces of linear maps. An $\Fq$-subspace $\C$ of $\Fq$-linear maps from $\Fn$ to itself leads naturally to a linear set $\Omega(\C)$ in $\PG(n-1,q^n)$. If such a subspace is in fact a $k$-dimensional $\Fn$-subspace, it also naturally leads to a linear set $\Gamma_\C$ in $\PG(k-1,q^n)$. These two seemingly distinct objects were considered in \cite{Lunardon}, \cite{Sheekey}. In \cite{Sheekey} an algebraic connection was shown between these linear sets in the special case of $k=2$, giving a correspondence between $2$-dimensional $\Fn$-linear {\it MRD codes} and {\it scattered linear sets} of rank $n$ on the projective line $\PG(1,q^n)$. This connection has lead to further investigations in \cite{CsajMarPol,CsajMarPolZan,CsajMarZul}.

In this paper we will give a geometric interpretation of the correspondence of \cite{Sheekey}, and extend it to all dimensions. Specifically, we show that
\[
\Gamma_\C \cong \Sigma/\Omega(\C)^\perp,
\]
where $\Sigma$ is a subgeometry, and show how the distribution of the ranks of the linear maps in $\Omega(\C)$ determine the distribution of weights of the points of $\Gamma_\C$.

Furthermore we characterise the linear sets of rank $n$ in $\PG(k-1,q^n)$ defined by $k$-dimensional $\Fn$-linear MRD-codes. We show that they are precisely those linear sets which are {\it scattered with respect to hyperplanes}, a concept which we introduce and is stronger than a linear set being scattered.

Finally we exploit Delsarte's theory of duality for subspaces of linear maps (in particular the rank-metric MacWilliams identities) to obtain information about the possible weight distribution of linear sets of rank $n$ on a projective line $\PG(1,q^n)$.


\section{Preliminaries}

\subsection{Linear sets}
\subsubsection{Definition}
Throughout this paper, we denote the finite field with $q$ elements by $\Fq$ and a $k$-dimensional vector space over $\F_q$ by $V(k,q)$.
The $(k - 1)$-dimensional projective space corresponding to $V(k, q)$ is denoted by $\PG(k-1, q)$. Points in $\PG(k-1,q)$ correspond to one-dimensional subspaces of $V(k,q)$, and $(j-1)$-dimensional subspaces in $\PG(k-1,q)$ correspond to $j$-dimensional subspaces of $V(k,q)$. In general, if $V$ is a vector space, then $\PG(V)$ denotes the correponding projective space.

If $\Fq$ contains a subfield $\F_{q'}$, then we call a subset $U$ of $V(k,q)$ an $\F_{q'}$-subspace if $U$ is closed under addition, and by scalar multiplication by elements of $\F_{q'}$; i.e. for all $u,v\in U$, $\lambda\in \F_{q'}$, we have that $u+v\in U$ and $\lambda u\in U$.

For any subset $U$ of $V(k,q)$ we denote by $U^{\ast}$ the set of nonzero elements of $U$.

\begin{definition}
Suppose $\F_{q'}$ is a subfield of $\Fq$. An {\it $\F_{q'}$-linear set of rank $s$} in $\PG(k-1,q)$ is a set
\[
L(U) := \{\la u\ra_{\Fq} : u\in U^{\ast}\}
\]
for some $\F_{q'}$-subspace $U$ of $V(k,q)$ with $\dim_{\F_{q'}}(U)=s$. Here $\la u\ra_{\Fq}$ denotes the projective point in $\PG(k-1,q)$, corresponding to the vector $u$, where the notation reflects the fact that all ${\Fq}$-multiples of $u$ define the same projective point. 
When $u=(u_0,\ldots,u_k)$ is a vector in $V(k,q)$, then $\la u\ra_{\Fq}=\la (u_0,\ldots,u_k)\ra_{\Fq}$ will be denoted as $(u_0,\ldots,u_k)_{\Fq}$.

\end{definition}

In this paper we will be mostly concerned with $\Fq$-linear sets of rank $n$ in $\PG(k-1,q^n)$. Linear sets of rank $n$ are of interest as they are the largest linear sets in terms of rank which do not necessarily meet every hyperplane. Particular attention will be paid to the case where $k=2$, that is, linear sets on a projective line. The following representation will be used throughout; for the first part of this statement, see also \cite[Lemma 7]{Lunardon}.

\begin{lemma}\label{linsetpol}
Suppose $L$ is an $\Fq$-linear set of rank $n$ in $\PG(k-1,q^n)$. Then there exist $\Fq$-linear maps $f_i:\Fn\rightarrow\Fn$ such that
\[
L = \{(f_1(x),\ldots,f_k(x))_{\Fn}:x\in \Fn^{\ast}\},
\]
and $\ker(f_1)\cap\cdots\cap\ker(f_k)=\{0\}$. If $\dim\la L\ra=k-1$, then the maps $\{f_1,\ldots,f_k\}$ are linearly independent over $\Fn$.

Vice versa, if $f_i:\Fn\rightarrow\Fn$ are $\Fq$-linear maps with $\ker(f_1)\cap\cdots\cap\ker(f_k)=\{0\}$, then $L = \{(f_1(x),\ldots,f_k(x))_{\Fn}:x\in \Fn^{\ast}\}$ is an $\F_q$-linear set of rank $n$ in $\PG(k-1,q^n)$.
\end{lemma}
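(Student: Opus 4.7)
The plan is to establish both directions by constructing a direct bridge between $n$-dimensional $\Fq$-subspaces $U$ of $V(k,q^n)$ and tuples of $\Fq$-linear maps $(f_1,\ldots,f_k)$ from $\Fn$ to itself, using the fact that $\Fn$ and $U$ have the same $\Fq$-dimension $n$.

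For the forward direction, I would start by writing $L = L(U)$ with $U$ an $n$-dimensional $\Fq$-subspace of $V(k,q^n)$. Since $\dim_{\Fq}(U) = n = \dim_{\Fq}(\Fn)$, I fix any $\Fq$-linear isomorphism $\phi : \Fn \to U$, and then define $f_i := \pi_i \circ \phi$, where $\pi_i : V(k,q^n) \to \Fn$ is the $i$-th coordinate projection (which is in particular $\Fq$-linear). Each $f_i$ is then an $\Fq$-linear map $\Fn \to \Fn$, and by construction $\phi(x) = (f_1(x), \ldots, f_k(x))$, so
\[
L = \{\langle \phi(x)\rangle_{\Fn} : x \in \Fn^\ast\} = \{(f_1(x),\ldots,f_k(x))_{\Fn} : x \in \Fn^\ast\}.
\]
Injectivity of $\phi$ immediately translates into $\ker(f_1) \cap \cdots \cap \ker(f_k) = \{0\}$, since this intersection is precisely $\ker(\phi)$.

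For the linear-independence statement, suppose $\dim \langle L\rangle = k-1$ but that there exist $\lambda_i \in \Fn$, not all zero, with $\sum_i \lambda_i f_i \equiv 0$. Then every vector $\phi(x) \in U$ satisfies the nontrivial $\Fn$-linear equation $\sum_i \lambda_i X_i = 0$, so $U$ lies in a hyperplane of $V(k,q^n)$, forcing $\langle L\rangle$ into a proper subspace of $\PG(k-1,q^n)$, a contradiction. Hence the $f_i$ are $\Fn$-linearly independent.

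For the converse, given $\Fq$-linear $f_i : \Fn \to \Fn$ with $\bigcap_i \ker(f_i) = \{0\}$, I define $\phi : \Fn \to V(k,q^n)$ by $\phi(x) = (f_1(x),\ldots,f_k(x))$. This map is $\Fq$-linear, and its kernel is exactly $\bigcap_i \ker(f_i) = \{0\}$, so $U := \phi(\Fn)$ is an $\Fq$-subspace of $V(k,q^n)$ of $\Fq$-dimension $n$. Then $L(U)$ is by definition the required linear set of rank $n$. The whole argument is essentially bookkeeping about $\Fq$-dimensions; no single step presents a real obstacle, though one must be a little careful in the linear-independence argument to distinguish $\Fq$-linearity of the $f_i$ from the $\Fn$-linear dependence being tested.
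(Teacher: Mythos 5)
Your proof is correct and takes essentially the same approach as the paper's: both directions are handled by fixing an $\Fq$-isomorphism between $\Fn$ and $U$, composing with the coordinate projections to get the $f_i$, and using the same hyperplane argument (a nontrivial $\Fn$-dependence $\sum_i \lambda_i f_i \equiv 0$ places $L$ in the hyperplane $(\lambda_1,\ldots,\lambda_k)^\perp$) for the independence claim. The only difference is cosmetic: you orient the isomorphism $\phi:\Fn\to U$ correctly, whereas the paper writes $\phi:U\mapsto\Fn$ but then applies it to elements of $\Fn$.
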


\begin{proof} As $L$ is an $\Fq$-linear set of rank $n$ in $\PG(k-1,q^n)$, $L=L(U)$ for some $\F_q$-subspace $U$ of rank $n$ of $V(k,q^n)$. The $\Fq$-subspace $U$ is isomorphic to $\Fn$ as an $\Fq$-vector space, let $\phi:U\mapsto \Fn$ be this isomorphism. Then $L(U)=\{\la \phi(x)\ra_{\Fn}:x\in \Fn^{\ast}\}$. Now $\phi(x)$ is an element of $V(k,q^n)$, and hence, can be written as $(f_1(x),\ldots,f_k(x))$ where $f_i=p_i\circ \phi$ and $p_i$ is the projection onto the $i$-th coordinate.

If there is a nonzero element in the intersection of the kernel of the $f_i$'s, then the rank of $L$ is strictly less than $n$. Finally if the maps $\{f_1,\ldots,f_k\}$ are linearly dependent over $\Fn$, then there exist $\alpha_i\in \Fn$, not all zero, such that $\sum_i \alpha_i f_i(x)=0$ for all $x$, implying that $L$ is contained in the hyperplane $(\alpha_1,\ldots,\alpha_k)^\perp$, and hence, $\dim\la L\ra \neq k-1$.

Vice versa, if all $f_i$ are $\Fq$-linear maps, then $\{(f_1(x),\ldots,f_k(x)):x\in \Fn\}$ defines an $\Fq$-subspace $U$ of rank $n$ if and only if $\ker(f_1)\cap\cdots\cap\ker(f_k)=\{0\}$. It follows that $L=L(U)$, and hence, $L$ is an $\Fq$-linear set.
 \end{proof}

We denote the $\Fq$-subspace of $V(k,q^n)$ used in this proof by $U_{f_1,\ldots,f_k}$, i.e.

  
\[U_{f_1,\ldots,f_k} := \{(f_1(x),\ldots,f_k(x)): x\in \Fn \},
\]

and the associated linear set
\[
L_{f_1,\ldots,f_k} := L(U_{f_1,\ldots,f_k})
\]
\begin{definition}
We say that two $\Fq$-subspaces of $V(k,q^n)$ are {\it equivalent} if there exists an element of $\GammaL(k,q^n)$ mapping $L_1$ to $L_2$. 

We say that two linear sets $L_1,L_2$ in $\PG(k-1,q^n)$ are {\it equivalent} if there exists an element of $\PGammaL(k,q^n)$ mapping $L_1$ to $L_2$. 
\end{definition}

\begin{remark}\label{slechtequiv}
Clearly, if the subspaces $U_1$ and $U_2$ are equivalent, then the linear sets $L(U_1)$ and $L(U_2)$ are equivalent. However the converse is not true; for example, defining
\begin{align*}
U_s&=\{(x,x^{q^s}):x\in \Fn\}\subset V(2,q^n),
\end{align*}
it holds that for any $s\notin \{1,n-1\}$ such that $(s,n)=1$, we have that $U_1$ is inequivalent to $U_s$, but $L(U_1)=L(U_s)$ (see
\cite[Remark 5.6]{CsajMarPol}).
\end{remark}

\begin{definition}\label{defwt}
For a linear set $L(U)$, we define the {\it weight} of a point $P$ defined by a vector $v\in V$ as 
\[
wt_{L(U)}(P):=\dim_{\Fq}(U\cap \langle v\rangle_{q^n}).
\]
For an $(s-1)$-dimensional subspace $\pi = \PG(W,q^n)$, where $W$ is an $s$-dimensional $\Fn$-subspace of $V$, we define, following \cite{olga}, the weight of $\pi$ with respect to $L(U)$ by
\[
wt_{L(U)}(\pi) := \dim_{\Fq}(U\cap W).
\]
\end{definition}

\begin{definition}
A linear set $L(U)$ is said to be {\it scattered} if the weight $w_{L(U)}(P)$ of any point $P$ is at most one.
\end{definition}

Scattered linear sets were introduced in \cite{BlLa2000}, and arise in various areas of finite geometry. See \cite{LavScat} for a recent survey on this topic.

\subsubsection{Linear sets as projected subgeometries}\label{proj}

We recall the following correspondence between linear sets and projected subgeometries from \cite{LP}. Let $\Sigma$ be a canonical subgeometry isomorphic to $\PG(s-1,q)$ of $\Sigma^*=\PG(s-1,q^n)$, let $\Lambda^*$ be a $(k-s-1)$- dimensional subspace of $\Sigma^*$ which is skew from $\Sigma$, and let $\Lambda$ be a $(k-1)$-dimensional subspace of $\Sigma^*$, skew from $\Lambda^*$, then the projection of $\Sigma$ from $\Lambda^*$ onto $\Lambda$, denoted by $p_{\Lambda^*,\Lambda}(\Sigma)$ defines an $\F_q$-linear set of rank $s$ in $\Lambda$. Vice versa, every $\F_q$-linear set of rank $s$ in $\PG(k-1,q^n)$ can be obtained in this way.

The following equivalent point of view for the weight of a point $P$ in a linear set, obtained as a projected subgeometry, has been used in the literature. However, by lack of an explicit proof for the equivalence of both definitions, give a proof here. This result will be used in Proposition \ref{BW}.

\begin{proposition} \label{weightproj} Let $L=p_{\Lambda^*,\Lambda}(\Sigma)$ be a linear set of rank $s$ in $\Lambda=\PG(k-1,q^n)$ obtained by projecting $\Sigma$, an $(s-1)$-dimensional $\Fq$-subgeometry of $\Sigma^*=\PG(s-1,q^n)$ from the $(s-k-1)$-dimensional subspace $\Lambda^*$ of $\Sigma^*$ onto $\Lambda$. Let $\Lambda^*=\PG(Y)$, where $Y$ is an $(s-k)$dimensional $\Fn$-vector space and let $\Lambda=\PG(Z)$, where $Z$ is a $k$-dimensional $\Fn$-vector space. Let $\Sigma=\PG(T)$, where $T$ is an $s$-dimensional $\Fq$-vectorspace, and let $\Sigma^*$ be $\PG(T^*)$ where $T^*=T\otimes\Fn$. Then $L=L(U)$ with $U=Z\cap (Y\oplus T)$, and if $\pi$ is a subspace of $\Lambda$, then $wt_{L(U)}(\pi)$, the weight of $\pi$ with respect to $L(U)$, is equal to one plus the dimension of the $\Fq$-subspace $(\la \pi,\Lambda^*\ra \cap \Sigma)$ of the subgeometry $\Sigma$.

\end{proposition}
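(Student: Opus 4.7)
The plan is to work at the level of the underlying vector spaces and exploit the $\F_{q^n}$-linear projection along $Y$. Let $\phi\colon T^* \to Z$ be defined by $\phi(y+z)=z$ under the decomposition $T^* = Y\oplus Z$ (which holds since $Y$ and $Z$ are $\F_{q^n}$-skew of complementary dimensions); projectively this realises $p_{\Lambda^*,\Lambda}$ on points outside $\Lambda^*$. Because $\Sigma$ is skew from $\Lambda^*$ we have $T\cap Y = \{0\}$, so $\phi|_T\colon T\to Z$ is an $\F_q$-linear injection whose image is precisely $Z\cap(T+Y) = U$.

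The equality $L = L(U)$ is then immediate: the projection $p_{\Lambda^*,\Lambda}$ sends $\la t\ra_{q^n}\in\Sigma$ to $\la\phi(t)\ra_{q^n}$, and as $t$ ranges over $T^*$, $\phi(t)$ ranges bijectively over $U^*$ (as $\F_q$-vectors), so $L = \{\la u\ra_{q^n} : u\in U^*\} = L(U)$.

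For the weight, write $\pi = \PG(W)$ with $W\subseteq Z$ an $\F_{q^n}$-subspace, so $wt_{L(U)}(\pi)=\dim_{\F_q}(U\cap W)$. The key step — the only part requiring any care — is to verify that under the $\F_q$-isomorphism $\phi|_T\colon T\to U$, the preimage of $U\cap W$ equals $T\cap(W+Y)$. Both inclusions follow by a direct computation using $t-\phi(t)\in Y$ for $t\in T$: if $t\in T\cap(W+Y)$ then $t=w+y$ forces $\phi(t)=w\in W$, and conversely if $\phi(t)\in W$ then $t = \phi(t) + (t-\phi(t)) \in W+Y$. Hence $\dim_{\F_q}(U\cap W)=\dim_{\F_q}(T\cap(W+Y))$.

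Finally, $\la\pi,\Lambda^*\ra$ corresponds to the $\F_{q^n}$-subspace $W+Y$ of $T^*$, so $\la\pi,\Lambda^*\ra\cap\Sigma$ corresponds to the $\F_q$-subspace $T\cap(W+Y)$ of $T$, i.e., a subgeometry of $\Sigma$ of projective dimension $\dim_{\F_q}(T\cap(W+Y))-1$; adding one yields the claimed equality. There is no serious obstacle here; the only thing to keep straight is the bookkeeping between projective and vector-space dimensions across the three levels ($\Sigma^*$, the subgeometry $\Sigma$, and $\Lambda$), where ``dimension of the $\F_q$-subspace $\la\pi,\Lambda^*\ra\cap\Sigma$'' is read as the projective dimension of the corresponding subgeometry of $\Sigma$.
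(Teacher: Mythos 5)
Your proof is correct, and it reaches the conclusion by a genuinely different (and more self-contained) route than the paper. The paper simply cites \cite[Theorem 1]{LP} for the identification $L=L(U)$ with $U=Z\cap(Y\oplus T)$, and then disposes of the weight claim by a purely numerical symmetry: since $Y$, $T$, $W$ are pairwise disjoint $\Fq$-subspaces, the Grassmann formula gives
\[
\dim_{\Fq}\big((Y\oplus T)\cap W\big)=\dim_{\Fq}(Y)+\dim_{\Fq}(T)+\dim_{\Fq}(W)-\dim_{\Fq}(\la Y,T,W\ra)=\dim_{\Fq}\big((Y\oplus W)\cap T\big),
\]
so the two intersections have equal dimension without any map being exhibited (the paper's proof states this difference with the sign reversed, an evident typo; your computation is the correct one). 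You instead construct the explicit $\Fn$-linear projection $\phi:T^*\to Z$ along $Y$, observe that it realises $p_{\Lambda^*,\Lambda}$ on points off $\Lambda^*$, and transport everything through the $\Fq$-isomorphism $\phi|_T:T\to U$, verifying directly that the preimage of $U\cap W$ is $T\cap(W+Y)$ via $t-\phi(t)\in Y$. What your approach buys: the first claim $L=L(U)$ is proved rather than quoted, and you obtain an explicit bijection between $U\cap W$ and $T\cap(W+Y)$ rather than a mere equality of dimensions, which also makes the reading of ``one plus the dimension'' as one plus the \emph{projective} dimension of the subgeometry $\la\pi,\Lambda^*\ra\cap\Sigma$ transparent --- a point the paper leaves implicit. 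What the paper's argument buys is brevity: two lines of dimension counting. One notational caution: say ``as $t$ ranges over the nonzero vectors of $T$'' rather than ``over $T^*$'', since in this paper $T^*$ denotes the extension $T\otimes\Fn$, not the set of nonzero elements of $T$.
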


\begin{proof}It follows from the construction, given in \cite[Theorem 1]{LP}, that  $p_{\Lambda^*,\Lambda}(\Sigma)=L(U)$ with $U=Z\cap (Y\oplus T)$, where $Y$ and $Z$ are considered as $\Fq$-subspaces of dimensions $n(s-k)$ and $nk$ respectively.

 Now by Definition \ref{defwt}, if $\pi=\PG(W)$, with $W\leq Z$, then $wt_{L(U)}(\pi)$ is equal to $\dim_{\Fq}(U\cap W)=\dim_{\Fq}((Z\cap (Y\oplus T))\cap W)$. Since $W\leq Z$, this is equal to $\dim_{\Fq}( (Y\oplus T)\cap W)$. Now  since $Y,T,W$ are disjoint $\Fq$-subspaces, this equals $\dim_{\Fq}(\la Y,T,W\ra)-\dim_{\Fq}(Y)-\dim_{\Fq}(T)-\dim_{\Fq}(W)=\dim_{\Fq}((Y\oplus W) \cap T)$. The subspace $(Y\oplus W) \cap T$ corresponds to the $\Fq$-subspace $(\la \pi,\Lambda^*\ra \cap \Sigma)$ of the subgeometry $\Sigma$ and the statement follows.
\end{proof}

\subsection{Linear maps and linearised polynomials}

\subsubsection{The spaces $\End_{\Fq}(\F_{q^n})$ and $\PG(\End_{\Fq}(\F_{q^n}))$}

Every $\Fq$-linear map from $\Fn$ to $\Fn$ can be uniquely represented as a linearised polynomial, i.e., as
$$f:x\mapsto \a_0x+\a_1x^q+\a_2x^{q^2}+\ldots+a_{n-1}x^{q^{n-1}},$$ where $\a_0,\ldots,a_{n-1}$ are elements of $\Fn$.
For $f,g$ $\Fq$-linear maps and $\a\in \Fn$, we have that $\a f:=x\mapsto \a f(x)$ and $f+g:=x\mapsto f(x)+g(x)$ are $\Fq$-linear maps as well. Hence, the set of $\Fq$-linear maps of $\Fn$ to $\Fn$, which is denoted by $\End_{\Fq}(\F_{q^n})$, forms an $\Fn$-vector space. Since there are $q^{n^2}$ such maps, $\End_{\Fq}(\F_{q^n})$ is $n$-dimensional over $\Fn$, and so $\E$ is isomorphic to $V(n,q^n)$. From now on we will write $V$ for $V(n,q^n)$.

We consider the projective space $\PG(\V)=\PG(n-1,q^n)$. Every point of $\PG(\V)$ is represented by an $\Fq$-linear map, defined up to a multiple of $\Fn$, so we denote the point corresponding to the map $f$ by $\la f\ran$.

Equivalently, we can make the correspondence $\V\mapsto \PG(n-1,q^n)$ explicit by defining the map $f\mapsto (\a_0,\a_1,\ldots,\a_{n-1})_{q^n}$, if $f:x\mapsto \a_0x+\a_1x^q+\a_2x^{q^2}+\ldots+a_{n-1}x^{q^{n-1}}$. Here, $(\a_0,\a_1,\ldots,\a_{n-1})_{q^n}$ denotes the projective point with homogeneous coordinates $(\a_0,\a_1,\ldots,\a_{n-1})$. We will abuse notation liberally throughout, by using the symbol $f$ to denote both a linearised polynomial and its coefficient vector, and switch freely between the two.

 \begin{definition} Given a linearised polynomial $f$, the {\it rank} of the corresponding vector in $V$ and point in $\PG(n-1,q^n)$ is the rank of $f$ as an element of $\E$. \end{definition}
\begin{definition} We will say a linearised polynomial (and its corresponding vector) is {\it invertible} if it has no non-zero roots in $\Fn$. \end{definition}

We denote the composition of two linearised polynomials $f,g$ as $f\circ g$, i.e. $(f\circ g)(x) := f(g(x))\mod x^{q^n}-x$. We define the dot product of two linearised polynomials $f=\sum \alpha_i x^i$, $g=\sum_i \beta_i x^i$ as the usual dot product of their coefficient vectors, i.e.
 \[
 f\cdot g := \sum_{i=0}^{n-1}\alpha_i \beta_i.
 \]

\subsubsection{The spaces $\Sigma_i$}

Every $\Fq$-linear map of rank $1$ is of the form $\a\Tr(\b x)$, where $\Tr$ is the trace map from $\Fn$ to $\Fq$, i.e., $\Tr:x\mapsto x+x^q+\ldots+x^{q^{n-1}}$. As $\la\a\Tr(\b x)\ran=\la \a' \Tr(\b x)\ran$ and $\la\Tr(\b x)\ran=\la \Tr(\lambda \b x)\ran$ if $\lambda \in \Fq^\ast$, we see that there are exactly $\frac{q^n-1}{q-1}$ projective points  $\la f\ran$ with $f$ a map of rank $1$. Call this set $\Sigma$. We get that
$$\Sigma = \{(\b,\b^q,\b^{q^2},\ldots,\b^{q^{n-1}})_{q^n}\mid \b\in \Fn^\ast\}$$ and
we see that $\Sigma$ defines an $\Fq$-subgeometry of $\PG(n-1,q^n)$.

In general, we define subsets $\Sigma_i$ of $\PG(n-1,q^n)$ by
\[
\Sigma_i = \{(\a_0,\a_1,\ldots,\a_{n-1})_{q^n} : \rank(\a_0x+\a_1x^q+\a_2x^{q^2}+\ldots+a_{n-1}x^{q^{n-1}}) \leq i\}.
\]
or, equivalently,
\[
\Sigma_i = \{\langle f\rangle_{q^n}: f\in V,\rank(f) \leq i\}.
\]
We see that $\Sigma_1$ equals the subgeometry $\Sigma$. It is well-known that any rank $k$ map can be written as the sum of $k$ rank $1$ maps. This means that the points of $\PG(\End_{\Fq}(\F_{q^n}))$ corresponding to maps of rank $k$ are all of the form $$\la \psi_1\Tr(\a_1 x)+\psi_2\Tr(\a_2 x)+\ldots+\psi_k\Tr(\a_k x)\ran$$ for some $\psi_1,\psi_2,\ldots,\psi_k$, and $\a_1,\ldots, \a_k$ in $\Fn$. Geometrically, these points are the points that lie on a subspace spanned by $k$ points of $\Omega$. This means that $\Sigma_i$ is the $(i-1)$-st {\em secant variety} of $\Sigma$. We see that the points in $\Sigma_{i}\backslash \Sigma_{i-1}$ are precisely the points of rank $i$.

\subsection{Rank-metric codes}
\subsubsection{Definition}
A {\it rank-metric code} is a set of maps $\C\subset \End_{\Fq}(\Fn)\simeq V$, with distance defined by the rank-distance;
\[
d(f,g) = \rank(f-g).
\]
As outlined above, we may regard $\C$ as a set of linearised polynomials. We define (following the notation of \cite{Lunardon}) the set $\Omega(\C)\subset \PG(n-1,q^n)$ by
\[
\Omega(\C) := \{\langle f\rangle_{q^n}:f\in \C^\ast\}.
\]
If $\C$ is an $\Fq$-subspace of $V$, then $\Omega(\C)$ is a linear set in $\PG(n-1,q^n)$. If $\C$ is an $\Fn$-subspace of $V$, then $\Omega(\C)$ is a subspace of $\PG(n-1,q^n)$.

\begin{definition}
A set $\C\subset V$ is called a {\it maximum rank distance} code if $|\C|=q^{nk}$ and the rank of any nonzero $f\in \C$ is at least $n-k+1$. The following is immediate by definition (see also \cite{Lunardon}).
\end{definition}

\begin{proposition}
A subset $\C$ of $V$ of size $q^{nk}$ is a maximum rank distance code if and only if $\langle f-g\rangle_{\Fn} \notin \Sigma_{n-k}$ for all $f,g\in \C$ with $f\ne g$.

An additively closed subset $\C$ of $V$ of size $q^{nk}$ is a maximum rank distance code if and only if $\Omega(\C)$ is disjoint from $\Sigma_{n-k}$.
\end{proposition}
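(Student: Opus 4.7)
The plan is to observe that both statements are essentially direct unpackings of the definitions, with no serious obstacle beyond careful bookkeeping. For the first statement, I would proceed as follows. For distinct $f,g \in \C$ the vector $f-g$ is nonzero, so $\langle f-g\rangle_{\Fn}$ is a well-defined projective point. Since $\rank(\lambda h) = \rank(h)$ for $\lambda\in\Fn^\ast$, the set $\Sigma_{n-k}$ is exactly the projective image of the vectors of rank at most $n-k$; therefore $\langle f-g\rangle_{\Fn}\notin \Sigma_{n-k}$ is equivalent to $\rank(f-g)\geq n-k+1$. Quantifying over all distinct pairs $f\ne g$ in $\C$ then gives precisely the condition that the minimum rank-distance of $\C$ is at least $n-k+1$, which combined with $|\C|=q^{nk}$ is the MRD property.

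For the second statement, I would use additive closure of $\C$ to identify the set of nonzero pairwise differences $\{f-g : f,g\in\C,\ f\ne g\}$ with $\C^\ast$. One inclusion is immediate from closure under subtraction; the other follows by writing $h = h - 0$ and noting that $0\in\C$ (since $0 = f - f$ for any $f\in\C$). Hence the condition from the first statement applied to each $h = f-g$ becomes ``$\langle h\rangle_{\Fn}\notin\Sigma_{n-k}$ for all $h\in\C^\ast$'', which by the definition $\Omega(\C) = \{\langle h\rangle_{\Fn} : h\in\C^\ast\}$ is exactly $\Omega(\C)\cap \Sigma_{n-k}=\emptyset$.

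The only subtlety worth noting is that the definition of MRD given in the preamble is phrased in terms of the ranks of individual codewords, whereas the proposition phrases it in terms of pairwise differences; these two formulations coincide precisely when $\C$ is additively closed, which is why the first statement has no additive-closure hypothesis but the second does. Since every step is a direct translation of definitions, I would not expect any technical difficulty in writing this out carefully; the authors' remark that the result is ``immediate by definition'' is apt.
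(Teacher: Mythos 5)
Your proposal is correct and is essentially the paper's own argument: the paper offers no written proof beyond declaring the proposition ``immediate by definition,'' and your unpacking --- scalar-invariance of rank making membership in $\Sigma_{n-k}$ equivalent to $\rank(f-g)\geq n-k+1$, then using additive closure to identify the nonzero differences with $\C^{\ast}$ so that the condition becomes $\Omega(\C)\cap\Sigma_{n-k}=\emptyset$ --- is exactly the intended verification. Your closing observation about the preamble's definition (phrased via ranks of individual codewords rather than differences, the two agreeing for additively closed codes) correctly identifies and resolves the only genuine subtlety in the statement.
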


This setup is very similar as for the geometric construction of (spread sets for) {\em semifields}, proven in \cite{LavSem}.

\subsubsection{Equivalence for rank-metric codes}

\begin{definition}
Two rank-metric codes $\C_1$ and $\C_2$ are said to be {\it equivalent} if there exist linearised polynomials $g,h,k\in \E$, with $g,h$ invertible, and an automorphism $\rho$ of $\Fn$ (not necessarily fixing $\Fq$), such that
\[
\C_2 = \{g\circ f^\rho\circ h +k : f\in \C_1\}.
\]

The action of $\rho \in \Aut(\F_{q^n})$ is defined by $f^\rho := \sum_{i=0}^{n-1} \a_i^\rho x^{q^i}$; i.e. $f^\rho(x) = (f(x^{\rho^{-1}}))^\rho$.
\end{definition}
 If $\C_1,\C_2$ are $\Fq$-subspaces of $\E$, then we may assume $k=0$. 

\begin{definition}(see \cite{morrison})
Two rank-metric codes $\C_1$ and $\C_2$ are said to be {\it semilinearly equivalent} if there exist an invertible linearised polynomial $h\in \E$, an integer $m\in \{0,\ldots,n-1\}$, and an automorphism $\rho$ of $\F_{q^n}$, such that
\[
\C_2 = \{x^{q^m}\circ f^\rho \circ h : f\in \C_1\}.
\]
\end{definition}
Clearly semilinear equivalence implies equivalence. 

%



\begin{proposition}
\label{prop:Fnpreserve}
Suppose $\C$ is an $\Fn$-subspace of $V$. Suppose $g(x) = \sum_{m=0}^{n-1}g_mx^{q^m}$ is an invertible linearised polynomial such that $g\circ \C$ is also an $\Fn$-subspace of $V$. Then for every $m$ such that $g_m\ne 0$, it holds that $g\circ \C = x^{q^m}\circ \C$.
\end{proposition}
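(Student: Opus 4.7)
The plan is to compute the $\Fn$-linear span of $g \circ \C$ inside $V$ and then conclude by a dimension count. Every $x^{q^m} \circ \C$ is itself an $\Fn$-subspace of $V$ (a routine check using the commutation $M_\alpha \circ x^{q^m} = x^{q^m} \circ M_{\alpha^{q^{-m}}}$ together with $\Fn$-closedness of $\C$), and the expansion $g \circ f = \sum_{m=0}^{n-1} g_m (x^{q^m} \circ f)$ immediately yields $g \circ \C \subseteq \sum_{m \in M}(x^{q^m} \circ \C)$, where $M := \{m : g_m \ne 0\}$.

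The crux is to identify the $\Fn$-linear span of $g \circ \C$ with the whole of $\sum_{m \in M}(x^{q^m} \circ \C)$. Starting from a general $\Fn$-combination $\sum_j \gamma_j (g \circ c_j)$ and pushing each scalar $\gamma_j$ past the Frobenius powers via the identity $\gamma \cdot (x^{q^m} \circ f) = x^{q^m} \circ (\gamma^{q^{-m}} f)$, the combination rewrites as $\sum_m g_m (x^{q^m} \circ h_m)$ with $h_m := \sum_j \gamma_j^{q^{-m}} c_j \in \C$. The step I expect to be the main technical hurdle is showing that the tuple $(h_0,\ldots,h_{n-1})$ can be made to equal any prescribed element of $\C^n$: fixing an $\Fn$-basis of $\C$ and choosing $\{\gamma_j\}$ to be an $\Fq$-basis of $\Fn$, the equations for the $c_j$ split coordinate-by-coordinate on $\C$ and reduce to inverting the $n \times n$ Moore matrix $(\gamma_j^{q^{-m}})_{j,m}$, which is non-singular precisely because the $\gamma_j$ are $\Fq$-linearly independent.

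Granted this, the $\Fn$-span of $g \circ \C$ equals $\{\sum_m g_m (x^{q^m} \circ h_m) : h_m \in \C\} = \sum_{m \in M}(x^{q^m} \circ \C)$, using that $g_m \ne 0$ implies $g_m \cdot (x^{q^m} \circ \C) = x^{q^m} \circ \C$. Under the hypothesis that $g \circ \C$ is itself an $\Fn$-subspace, it coincides with its $\Fn$-span, so $g \circ \C = \sum_{m \in M}(x^{q^m} \circ \C)$. Since $g$ is invertible, left composition by $g$ is an $\Fq$-linear bijection on $V$, giving $\dim_{\Fn}(g \circ \C) = \dim_{\Fn} \C =: k$; for each $m \in M$, the $\Fn$-subspace $x^{q^m} \circ \C \subseteq g \circ \C$ also has $\Fn$-dimension $k$, so it must equal $g \circ \C$, as required.
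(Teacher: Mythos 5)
Your proof is correct, but it takes a genuinely different route from the paper's. The paper argues through duality: since $g\circ \C$ is a $k$-dimensional $\Fn$-subspace, it is the annihilator, under the dot product on coefficient vectors, of an $(n-k)$-dimensional $\Fn$-subspace $\H$; expanding $(g\circ \alpha f)\cdot h$ for all $\alpha\in\Fn$ gives $\sum_{m} g_m\bigl((x^{q^m}\circ f)\cdot h\bigr)\alpha^{q^m}=0$, and since a linearised polynomial of $q$-degree less than $n$ vanishing on all of $\Fn$ must have all coefficients zero, each $g_m\bigl((x^{q^m}\circ f)\cdot h\bigr)$ vanishes; for $g_m\ne 0$ this places $x^{q^m}\circ\C$ inside the annihilator of $\H$, i.e.\ inside $g\circ\C$, and equality follows by dimensions. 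You avoid duality entirely: you compute the $\Fn$-span of $g\circ\C$ directly by pushing scalars through the Frobenius powers via $\gamma\cdot(x^{q^m}\circ f)=x^{q^m}\circ(\gamma^{q^{-m}}f)$ and inverting a Moore matrix, obtaining $\langle g\circ\C\rangle_{\Fn}=\sum_{m:\,g_m\ne 0}(x^{q^m}\circ\C)$, then invoke the hypothesis to identify this sum with $g\circ\C$ and finish with the same dimension count (note that left composition with $g$ is only $\Fq$-linear, so it preserves the $\Fq$-dimension $nk$, and the $\Fn$-dimension $k$ of $g\circ\C$ then comes from the subspace hypothesis — your phrasing glosses this slightly, but harmlessly). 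The two arguments share the same engine: the coefficient-extraction step in the paper (a linearised polynomial in $\alpha$ with $q^n$ roots is identically zero) is exactly the $\Fq$-linear independence of Frobenius powers that makes your Moore matrix $(\gamma_j^{q^{-m}})$ nonsingular. What your version buys is self-containedness (no annihilator subspace needed) and a stronger unconditional intermediate result — the span identity holds for every invertible $g$, which makes it transparent why the hypothesis that $g\circ\C$ is an $\Fn$-subspace forces all the spaces $x^{q^m}\circ\C$ with $g_m\ne 0$ to coincide; what the paper's version buys is brevity and reuse of the dot-product duality framework it has already set up for Delsarte duality.
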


\begin{proof}
Suppose $\C$ is $k$-dimensional over $\Fn$. Then $g\circ \C$ is an $\Fn$-subspace if and only if there exists an $(n-k)$-dimensional subspace $\H$ of $V$ such that $(g\circ f)\cdot h=0$ for all $f\in \C,h\in \H$. Furthermore $g'\in g\circ \C$ if and only if $g'\cdot h=0$ for all $h\in \H$. Put $f=\sum_{i=0}^{n-1}f_ix^{q^i}$, $g=\sum_{m=0}^{n-1}g_mx^{q^m}$ and $h=\sum_{i=0}^{n-1}h_i x^{q^i}$. Now as $\alpha f \in \C$ for all $f\in \C$ and all $\alpha \in \Fn$, then for any $h\in \H$ we have
\begin{align*}
(g\circ \alpha f)\cdot h &= \left(\sum_{i=0}^{n-1} \left(\sum_{m=0}^{n-1} g_m \alpha^{q^m} f_{i-m}^{q^m}\right)x^{q^i}\right)\cdot \left(\sum_{i=0}^{n-1} h_i x^{q^i}\right),
\end{align*}
where we have used the convention that $f_l=f_{n-l}$ if $l<0$. Now we find that 
\begin{align*}
\left(\sum_{i=0}^{n-1} \left(\sum_{m=0}^{n-1} g_m \alpha^{q^m} f_{i-m}^{q^m}\right)x^{q^i}\right)\cdot \left(\sum_{i=0}^{n-1} h_i x^{q^i}\right)&=  \sum_{i=0}^{n-1} \left(\sum_{m=0}^{n-1} g_m \alpha^{q^m} f_{i-m}^{q^m}\right)h_i\\
&= \sum_{m=0}^{n-1} g_m \left(\sum_{i=0}^{n-1} f_{i-m}^{q^{\color{blue}m}} h_i\right) \alpha^{q^m}\\
&= \sum_{m=0}^{n-1} g_m ((x^{q^m}\circ f) \cdot h)\alpha^{q^m}\\
&=0
\end{align*}
for all $\alpha \in \Fn$. Thus for each $m$ we must have $g_m ((x^{q^m}\circ f) \cdot h)=0$, and so if $g_m\ne 0$ then $(x^{q^m}\circ f) \cdot h = 0$ for all $f\in \C,h\in \H$. But then $x^{q^m}\circ f\in g\circ \C$ for all $f\in \C$, implying $g\circ \C = x^{q^m}\circ \C$, as claimed.
\end{proof}

\begin{proposition}
\label{prop:Fnequiv}
Suppose $\C_1$ and $\C_2$ are $\Fn$-subspaces of $V$. Then $\C_1$ and $\C_2$ are equivalent if and only if they are semilinearly equivalent.
\end{proposition}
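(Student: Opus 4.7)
The plan is to extract semilinear equivalence directly from Proposition \ref{prop:Fnpreserve}. The reverse implication is immediate: semilinear equivalence is just the special case of equivalence in which $g(x)=x^{q^m}$ (and $k=0$), so semilinear equivalence always implies equivalence.

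For the forward direction, suppose $\C_1,\C_2$ are $\Fn$-subspaces that are equivalent. Since both are additively closed, the remark after the definition of equivalence lets us take $k=0$, so
\[
\C_2 \;=\; \{\, g\circ f^\rho\circ h : f\in \C_1\,\}
\]
for some invertible linearised polynomials $g,h\in\E$ and an automorphism $\rho\in\Aut(\Fn)$. First I would check that the intermediate object $\C := \{f^\rho\circ h : f\in \C_1\}$ is again an $\Fn$-subspace of $V$. For $\Fn$-linearity under $\rho$, observe that $(\alpha f)^\rho = \alpha^\rho f^\rho$, so $\C_1^\rho$ is closed under multiplication by $\alpha^\rho$ for all $\alpha\in\Fn$; since $\rho$ fixes $\Fn$ setwise, $\C_1^\rho$ is an $\Fn$-subspace. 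Right-composition with $h$ commutes with $\Fn$-scalar multiplication, i.e.\ $(\alpha f^\rho)\circ h = \alpha (f^\rho\circ h)$, so $\C$ is an $\Fn$-subspace as well.

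Now $\C_2 = g\circ \C$, where both $\C$ and $g\circ \C$ are $\Fn$-subspaces and $g$ is invertible (hence nonzero). Proposition \ref{prop:Fnpreserve} applies and yields, for every $m$ with $g_m\neq 0$,
\[
g\circ \C \;=\; x^{q^m}\circ \C.
\]
Choosing any such $m$ (at least one exists since $g\neq 0$), we obtain
\[
\C_2 \;=\; x^{q^m}\circ \C \;=\; \{\, x^{q^m}\circ f^\rho\circ h : f\in \C_1\,\},
\]
which is precisely the definition of semilinear equivalence.

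I expect no serious obstacle: the only substantive point is the observation that $\C_1^\rho\circ h$ is an $\Fn$-subspace, which reduces the situation exactly to the hypothesis of Proposition \ref{prop:Fnpreserve}, and from there the conclusion is immediate. The ``work'' of the theorem has been absorbed into that proposition, which replaces the arbitrary invertible polynomial $g$ by a single Frobenius monomial $x^{q^m}$ whenever $g\circ \C$ remains $\Fn$-linear.
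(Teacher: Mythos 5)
Your proof is correct and takes essentially the same route as the paper: both reduce the claim to Proposition \ref{prop:Fnpreserve} by observing that the equivalence $\C_2 = g\circ\C_1^\rho\circ h$ exhibits $g$ composed with an $\Fn$-subspace yielding an $\Fn$-subspace, whence $g$ can be replaced by some $x^{q^m}$. The only cosmetic difference is that the paper absorbs $\rho$ and $h$ onto the $\C_2$ side and applies the proposition to $g\circ\C_1$, whereas you apply it directly to $g\circ(\C_1^\rho\circ h)$, spelling out (as the paper leaves implicit) that $\C_1^\rho\circ h$ is again an $\Fn$-subspace.
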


\begin{proof}
If $\C_1$ and $\C_2$ are semilinearly equivalent, then by definition they are equivalent. Suppose now $\C_1$ and $\C_2$ are equivalent, i.e. $\C_2^\rho = g\circ \C_1 \circ h$ for some invertible $g,h$. Now $\C_2^\rho\circ h^{-1}$ is an $\Fn$-subspace, and hence, $g\circ \C_1$ is an $\Fn$-subspace. So, by Proposition \ref{prop:Fnpreserve} there exists an $m$ such that $g\circ \C_1 = x^{q^m}\circ \C_1$. Therefore $\C_2^\rho = x^{q^m} \circ \C_1 \circ h$, and so $\C_1$ and $\C_2$ are semilinearly-equivalent, proving the claim.
\end{proof}

\begin{remark}
If we regard $V$ as a vector space over $\Fq$, then the set of rank one maps defines a {\it Segre variety} in $\PG(V,\Fq)\simeq \PG(n^2-1,q)$. The set of one-dimensional $\Fn$-subspaces of $V$ corresponds to a desarguesian spread $\D$, and the Segre variety is partitioned by these spaces. This is the field-reduction of the subgeometry $\PG(n-1,q)$. $\Fn$-subspaces are then precisely those subspaces spanned by elements of $\D$

The collineation group of $\PG(n^2-1,q)$ is $\PGammaL(n^2,q)$. The subgroup of this fixing $\D$ is isomorphic to $\PGammaL(n,q^n)$.

The group induced by the set of equivalences is the set of elements of $\PGammaL(n^2,q)$ which fix the field-reduced subgeometry. 

The group induced by the set of semilinear equivalences is the set of elements of $\PGammaL(n,q^n)$ which fix the field-reduced subgeometry. It is well-known that this has the form $\Aut(\Fn:\Fq).\PGammaL(n,q)$.

%
%
%
%
%

Hence what we have shown here is that two subspaces of $\PG(n^2-1,q)$ obtained by field-reduction of subspaces of $\PG(n-1,q^n)$ are equivalent under the stabiliser in $\PGammaL(n^2,q)$ of the field-reduced subgeometry if and only if the two subspaces are equivalent under the stabiliser in $\PGammaL(n,q^n)$ of the field-reduced subgeometry.


%
We suspect that this result may be known. However as we could not find an exact reference, we chose to include a proof. Similar ideas can be found in for example \cite{LiebholdNebe}, \cite{morrison}, though neither imply this result.
\end{remark}




\subsection{Duality in $\E$ and in $\PG(n-1,q^n)$}

Delsarte considered duality in $V=\E$ by representing endomorphisms as matrices over $\Fq$ and defining a symmetric bilinear form
\[
(A,B)\mapsto \Tr(AB^T),
\]
where $\Tr$ denotes the matrix trace. He showed that using this inner product, the rank-distribution of a subspace and its dual are related by a rank-metric version of the MacWilliams identities \cite{Delsarte}. 

In this paper we will use a different symmetric bilinear form, more suited to working with linearised polynomials, following \cite{Sheekey}. The dual of a subspace with respect to this form is equivalent to the dual with respect to the form used by Delsarte. 

For two elements of $V$ given by $f:x\mapsto \sum_{i=0}^{n-1}f_i x^{q^i}$, $g:x\mapsto \sum_{i=0}^{n-1}g_i x^{q^i}$, we define the symmetric bilinear form
\[
(f,g)\mapsto \tr\left(\sum_{i=0}^{n-1} f_i g_i\right),
\]
where $\tr$ denotes the field trace from $\Fn$ to $\Fq$. 

The {\em (Delsarte) dual} of an $\F_q$-subspace $\C$ of $V$ is then defined as 
$$\C^{\bot}=\left\{g \in V : \tr\left(\sum_{i=0}^{n-1} f_i g_i\right)=0\ \forall f\in \C\right\}.$$

If $\C$ is an $\Fn$-subspace of $V$, then it is easy to check that the dual of $\C$ with respect to this form is equal to the dual of $\C$ with respect to the form
\[
(f,g)\mapsto \sum_{i=0}^{n-1} f_i g_i = f\cdot g.
\]

Thus we may alternatively define the {\em (Delsarte) dual} of an $\Fn$-subspace $\C$ of $\E$ as 
$$\C^{\bot}=\{g \in V : f \cdot g=0\ \forall f\in \C\}.$$
\begin{remark}
Care should be taken when considering duality for rank-metric codes in the non-square case, as duals with respect to this dot product may not be equivalent to duals with respect to Delsarte's form. In this paper we are only concerned with the square case.
\end{remark}

The (Delsarte) dual of an MRD code is again an MRD code. For more information about duality of rank-metric codes, we refer to \cite{Ravagnani}.

The Delsarte dual operation $\bot$ on $\E$ induces a dual operation on $\PG(n-1,q^n)$, which we also will denote by $\bot$. In particular, as seen in the previous subsection, an $\F_{q^n}$-linear subspace $\C$ of $\E$ corresponds to a projective subspace $\Omega(\C)$ of $\PG(n-1,q^n)$ and we have that 

 $$\Omega(\C^\perp)=\Omega(\C)^\perp.$$

\section{Linear sets from MRD codes}

In this section we provide a geometric interpretation of the correspondence outlined in \cite{Sheekey} between scattered linear sets on a projective line and certain classes of MRD codes. In Section \ref{new} we will extend this correspondence to MRD codes of higher dimensions. We will incorporate the notion of Delsarte duality of a rank-metric code into this geometric picture, which will allow us to exploit the MacWilliams identities for rank-metric codes to investigate linear sets in Section \ref{mac}. 

\subsection{Linear sets in $\PG(k-1,q^n)$ from $\Fn$-subspaces of $V$}\label{omegas}

Suppose $\C$ is a $k$-dimensional $\Fn$-subspace of $V$, and hence $\Omega(\C)$ a $(k-1)$-dimensional subspace of $\PG(n-1,q^n)$. Recall that $\Sigma$ is an $\Fq$-subgeometry of $\PG(n-1,q^n)$.

\begin{proposition}\label{omegabot}
If $\C$ is a $k$-dimensional $\Fn$-subspace of $V$, then $\{x\in \Fn:g(x)=0~\forall g\in \C\}=\{0\}$ if and only if $\Omega(\C)^{\perp}\cap \Sigma=\emptyset$.
\end{proposition}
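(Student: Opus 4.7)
The plan is to translate both conditions into explicit statements about the dot product $f \cdot g = \sum_{i=0}^{n-1} f_i g_i$ (which, since $\C$ is an $\Fn$-subspace, computes $\C^\perp$ according to the discussion preceding the proposition) and to observe a slick evaluation identity.

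First I would unpack $\Sigma$ explicitly. By the description in Subsection 2.2.2, $\Sigma$ consists precisely of the projective points $\la \Tr(\b x)\ran_{q^n}$ as $\b$ ranges over $\Fn^\ast$, and the linearised polynomial $\Tr(\b x) = \b x + \b^q x^q + \cdots + \b^{q^{n-1}} x^{q^{n-1}}$ has coefficient vector $(\b, \b^q, \b^{q^2}, \ldots, \b^{q^{n-1}})$. Next, by the definition of $\Omega(\C)^\perp$, a point of $\Sigma$ lies in $\Omega(\C)^\perp$ exactly when there is some $\b \in \Fn^\ast$ such that the coefficient vector of $\Tr(\b x)$ is Delsarte-orthogonal to every $f \in \C$.

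The key observation is the identity
\[
f \cdot (\b, \b^q, \ldots, \b^{q^{n-1}}) \;=\; \sum_{i=0}^{n-1} f_i \b^{q^i} \;=\; f(\b),
\]
valid for every linearised polynomial $f = \sum f_i x^{q^i} \in V$. Combining this with the previous step gives
\[
\Omega(\C)^\perp \cap \Sigma \neq \emptyset \iff \exists\, \b\in\Fn^\ast \text{ such that } f(\b) = 0 \text{ for all } f\in \C,
\]
which is precisely the negation of $\{x\in \Fn : g(x)=0 \;\forall g\in\C\}=\{0\}$. Contrapositive yields the claim.

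The only subtlety to check is that the proposition is really about the dot product (not the trace form), which is legitimate because $\C$ is $\Fn$-linear, as noted in the paragraph defining the Delsarte dual for $\Fn$-subspaces. The rest is a one-line evaluation, so I don't anticipate any real obstacle; the entire content of the proposition is the coincidence that the dot product with $(\b, \b^q, \ldots, \b^{q^{n-1}})$ evaluates the linearised polynomial at $\b$.
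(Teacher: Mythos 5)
Your proof is correct and is essentially the paper's own argument: both identify the points of $\Sigma$ with the coefficient vectors $(\beta,\beta^q,\ldots,\beta^{q^{n-1}})$ of the rank-one maps $\Tr(\beta x)$ and use the evaluation identity $f\cdot(\beta,\beta^q,\ldots,\beta^{q^{n-1}})=f(\beta)$ to translate $\Omega(\C)^{\perp}\cap\Sigma\neq\emptyset$ into the existence of a common nonzero root of all $g\in\C$. Your closing remark that the plain dot product (rather than the trace form) is the legitimate pairing here because $\C$ is $\Fn$-linear is a point the paper's proof uses implicitly, so it is a welcome but inessential addition.
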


\begin{proof} If $g\in \C$, then $g$ is a linearised polynomial, so we can write $g(x)=\sum_{i=0}^{n-1} g_i x^{q^i}$.
A point $(x,x^q,\ldots,x^{q^{n-1}})_{q^n}\in \Sigma$ is in $\Omega(\C)^\perp$ if and only if $(x,x^q,\ldots,x^{q^{n-1}})\cdot (g_0,\ldots,g_{n-1})=0$ for all $g\in \C$, if and only if $\sum_{i=0}^{n-1} g_i x^{q^i}=0$ for all $g\in \C$, if and only if $g(x)=0$ for all $g\in \C$.
\end{proof}



As we saw in Lemma \ref{linsetpol}, a $k$-tuple of linearised polynomials gives an $\Fq$-subspace of $V(k,q^n)$
$$U_{f_1,\ldots,f_k}=\{(f_1(x),\ldots,f_k(x))\vert x\in \mathbb{F}_{q^n}\},$$
and a linear set in $\PG(k-1,q^n)$
$$L_{f_1,\ldots,f_k}:=\{(f_1(x),\ldots,f_k(x))_{q^n}\vert x\in \mathbb{F}_{q^n}^*\} = L(U_{f_1,\ldots,f_k}).$$

Thus from a $k$-dimensional $\Fn$-subspace $\C$ of $V$, we can define a family of linear sets in $\PG(k-1,q^n)$.
\begin{definition}
For a $k$-dimensional $\Fn$-subspace $\C$ of $V$, we define a set of $\Fq$-subspaces of $V(k,q^n)$ by
$$\U_\C:=\{U_{f_1,\ldots,f_k}:f_i \in V, \C = \langle f_1,\ldots,f_k\rangle_{q^n}\},$$
and a set of linear sets in $\PG(k-1,q^n)$ by
$$\L_\C:=\{L_{f_1,\ldots,f_k}:f_i \in V, \C = \langle f_1,\ldots,f_k\rangle_{q^n}\}.$$
\end{definition}


Different choices of basis for $\C$ may give different linear sets. However, it is easy to prove that they are all $\PGL(k,q^n)$-equivalent.

\begin{proposition}\label{propeq}
Suppose $\C$ is a $k$-dimensional $\Fn$-subspace of $V$ such that $\Omega(\C)^{\perp}\cap \Sigma=\emptyset$. Then any two elements of $\L_\C$ are $\PGL(k,q^n)$-equivalent linear sets of rank $n$ in $\PG(k-1,q^n)$. Conversely, for any $L\in \L_\C$ and any $\phi \in \PGL(k,q^n)$, it holds that $L^\phi \in \L_\C$. Thus $\L_\C$ is a $\PGL(k,q^n)$-equivalence class of linear sets of rank $n$ in $\PG(k-1,q^n)$.

%

%
%
%
\end{proposition}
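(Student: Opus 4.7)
The plan is to directly translate change of basis on $\C$ into the action of $\PGL(k,q^n)$ on $\PG(k-1,q^n)$, and vice versa. First, note that under the standing hypothesis $\Omega(\C)^\perp \cap \Sigma = \emptyset$, Proposition~\ref{omegabot} gives $\ker(f_1)\cap\cdots\cap\ker(f_k)=\{0\}$ for \emph{every} $\Fn$-basis $(f_1,\ldots,f_k)$ of $\C$ (since this intersection depends only on $\C$, not on the basis). Hence Lemma~\ref{linsetpol} applies, and every element of $\L_\C$ is indeed an $\Fq$-linear set of rank $n$ in $\PG(k-1,q^n)$.

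For the first assertion, let $(f_1,\ldots,f_k)$ and $(g_1,\ldots,g_k)$ be two $\Fn$-bases of $\C$, related by an invertible matrix $A=(a_{ij})\in\GL(k,q^n)$ via $g_j=\sum_i a_{ij}f_i$. Then for every $x\in\Fn$,
\[
(g_1(x),\ldots,g_k(x)) \;=\; (f_1(x),\ldots,f_k(x))\,A,
\]
so $U_{g_1,\ldots,g_k}$ is the image of $U_{f_1,\ldots,f_k}$ under the $\Fn$-linear map corresponding to $A$. Consequently $L_{g_1,\ldots,g_k}$ is the image of $L_{f_1,\ldots,f_k}$ under the induced element of $\PGL(k,q^n)$, proving equivalence.

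For the converse, given $L=L_{f_1,\ldots,f_k}\in\L_\C$ and $\phi\in\PGL(k,q^n)$ represented by an invertible matrix $B=(b_{ij})$, define $h_i:=\sum_j b_{ij}f_j$. Because $B$ is invertible over $\Fn$, the tuple $(h_1,\ldots,h_k)$ is again an $\Fn$-basis of $\C$, and the same computation as above shows $L^\phi = L_{h_1,\ldots,h_k} \in \L_\C$. Combining the two directions yields the final statement that $\L_\C$ is a single $\PGL(k,q^n)$-orbit.

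The argument is essentially a bookkeeping exercise, so there is no real obstacle; the only care needed is to keep straight that an $\Fn$-change of basis in $\C$ corresponds to a genuine $\Fn$-linear (not merely $\Fq$-linear) collineation, which is exactly what is required to land in $\PGL(k,q^n)$, and that the rank-$n$ property (and hence membership in the class of linear sets produced by Lemma~\ref{linsetpol}) is preserved because the hypothesis $\Omega(\C)^\perp\cap\Sigma=\emptyset$ depends only on $\C$.
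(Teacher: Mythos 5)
Your proposal is correct and follows essentially the same route as the paper's own proof: it invokes Proposition~\ref{omegabot} together with Lemma~\ref{linsetpol} to establish the rank-$n$ property, and then translates change of $\Fn$-basis in $\C$ into the $\PGL(k,q^n)$-action (and back) via the identity $(g_1(x),\ldots,g_k(x))=(f_1(x),\ldots,f_k(x))A$. The only cosmetic difference is your explicit remark that the common kernel depends only on $\C$ and not on the chosen basis, which the paper leaves implicit.
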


\begin{proof}
Let $L_{f_1,\ldots,f_k}$ be an element of $\L_\C$. If there would be a non-zero element $y\in \Fn$ such that $f_1(y)=\ldots=f_k(y)=0$, then, as $f_1,\ldots,f_k$ is a basis for $\C$, $y$ would be contained in $\{x\in \Fn:g(x)=0~\forall g\in \C\}$, a contradiction by Lemma \ref{omegabot}, since $\Omega(\C)^{\perp}\cap \Sigma=\emptyset$ by our assumption. So this implies that $f_1,\ldots,f_k$ have no non-trivial common zeroes and that $L_{f_1,\ldots,f_k}$ has rank $n$ by Lemma \ref{linsetpol}.


Now consider $L_{f_1,\ldots,f_k}$ and $L_{f_1',\ldots,f_k'}$ in $\L_\C$. As $f_1,\ldots,f_k$ and $f_1',\ldots,f_k'$ each form a basis for $\C$, we know that there is an element $\phi$ of $\GL(k,q^n)$ such that $f_j' = \sum_{i=1}^k \phi_{ij}f_i$. Then 
\[
(f_1(x),\ldots,f_k(x))\phi = (f_1'(x),\ldots,f_k'(x))
\]
for all $x$, and so $\la \phi\ra_{q^n}$ is an element of $\PGL(k,q^n)$ mapping $L_{f_1,\ldots,f_k}$ onto $L_{f_1',\ldots,f_k'}$.

Conversely, for any $\phi$ of $\GL(k,q^n)$, and $L_{f_1,\ldots,f_k}\in \L_\C$, define $f_j' = \sum_{i=1}^k \phi_{ij}f_i$. Then $f_1',\ldots,f_k'$ forms a basis for $\C$, and
\[
(L_{f_1,\ldots,f_k})^{\la \phi\ra_{q^n}} = L_{f_1',\ldots,f_k'}\in \L_\C.
\]
\end{proof}

\begin{proposition}\label{propeq2}
If $\C$ and $\C'$ are equivalent $\Fn$-subspaces of $V$, then there exists for every $L\in \L_\C$, an element $\phi\in \PGammaL(k,q^n)$ such that $L^\phi\in \L_{\C'}$.
A linear set $L$ is $\PGammaL(k,q^n)$-equivalent to an element of $\L_\C$ if and only if $L\in \L_{\C^\sigma}$ for some $\sigma\in \Aut(\Fn)$. Thus 
\[
\bigcup_{\sigma\in \Aut(\Fn)} \L_{\C^\sigma}
\]
is a $\PGammaL(k,q^n)$-equivalence class of linear sets of rank $n$ in $\PG(k-1,q^n)$.
\end{proposition}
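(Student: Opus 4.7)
The plan is to reduce everything to Propositions \ref{propeq} and \ref{prop:Fnequiv}, together with a short analysis of how the three basic building blocks of a semilinear equivalence act on the linear set $L_{f_1,\ldots,f_k}$.

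For Part 1, I would invoke Proposition \ref{prop:Fnequiv} to rewrite the equivalence between $\C$ and $\C'$ as a semilinear one, so that $\C'=\{x^{q^m}\circ f^\rho\circ h : f\in \C\}$ for some integer $m$, some invertible linearised polynomial $h$, and some $\rho\in\Aut(\Fn)$. Starting from a basis $f_1,\ldots,f_k$ of $\C$ realising $L=L_{f_1,\ldots,f_k}$, the maps $f_i':=x^{q^m}\circ f_i^\rho\circ h$ form a basis of $\C'$, so $L_{f_1',\ldots,f_k'}\in\L_{\C'}$. It then suffices to produce $\phi\in\PGammaL(k,q^n)$ sending $L_{f_1,\ldots,f_k}$ to $L_{f_1',\ldots,f_k'}$.

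Next I would handle the three building blocks separately. First, precomposition with $h$ does not change the linear set at all: since $h$ is a bijection of $\Fn^*$, substituting $y=h(x)$ gives $L_{f_1\circ h,\ldots,f_k\circ h}=L_{f_1,\ldots,f_k}$. Second, the identity $(f(x))^\rho=f^\rho(x^\rho)$ shows that the coordinatewise action of $\rho$, which is an element of $\PGammaL(k,q^n)$, sends $L_{f_1,\ldots,f_k}$ to $L_{f_1^\rho,\ldots,f_k^\rho}$. Third, post-composition with $x^{q^m}$ is exactly the coordinatewise Frobenius $x\mapsto x^{q^m}$, again a $\PGammaL$-element, and it sends $L_{f_1,\ldots,f_k}$ to $L_{x^{q^m}\circ f_1,\ldots,x^{q^m}\circ f_k}$. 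Composing these three maps furnishes the desired $\phi$.

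For Part 2, I would exploit the decomposition $\PGammaL(k,q^n)=\PGL(k,q^n)\rtimes\Aut(\Fn)$. In the forward direction, if $L$ is $\PGammaL$-equivalent to some $L'\in\L_\C$, write the equivalence as a composition of a coordinatewise $\sigma\in\Aut(\Fn)$ and a $\PGL$-element; the second building block above sends $L'$ to an element of $\L_{\C^\sigma}$, and Proposition \ref{propeq} shows that the $\PGL$-part then preserves $\L_{\C^\sigma}$, so $L\in\L_{\C^\sigma}$. Conversely, given $L\in\L_{\C^\sigma}$ and any $L'\in\L_\C$, the element $L'^\sigma$ lies in $\L_{\C^\sigma}$ by the same argument, and $L$ and $L'^\sigma$ are $\PGL(k,q^n)$-equivalent by Proposition \ref{propeq}, so $L$ is $\PGammaL$-equivalent to $L'\in\L_\C$. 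The main obstacle is the bookkeeping around the convention $f^\rho=\sum\alpha_i^\rho x^{q^i}$, which intertwines a Galois action on coefficients with the exponent structure of linearised polynomials; once the identity $(f(x))^\rho=f^\rho(x^\rho)$ is in hand, each building block becomes a one-line check and Part 2 drops out of the group decomposition of $\PGammaL$.
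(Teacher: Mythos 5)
Your proof is correct, and for the first claim it is in substance the same as the paper's: the paper invokes Proposition \ref{prop:Fnequiv} to write $\C' = x^{q^m}\circ \C^{\rho}\circ h$ and then performs the single substitution $y=h(x)^{\rho^{-1}}$, which transforms $L_{x^{q^m}\circ f_1^{\rho}\circ h,\ldots,x^{q^m}\circ f_k^{\rho}\circ h}$ into $\{(f_1(y)^{q^m\rho},\ldots,f_k(y)^{q^m\rho})_{q^n}:y\in\Fn^*\}$; that one substitution is exactly the composite of your three building blocks (kill $h$ by reparametrising, then apply the coordinatewise $\rho$ and the coordinatewise $q^m$-Frobenius, which commute as elements of $\Aut(\Fn)$). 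Your modular version is slightly longer but makes each verification a one-liner, and it has the advantage of isolating the fact that precomposition by an invertible $h$ leaves the linear set, not just its equivalence class, unchanged. Where you genuinely add value is the second claim: the paper's printed proof stops after establishing the first statement and never argues the ``if and only if'' or the equivalence-class assertion, whereas your decomposition $\PGammaL(k,q^n)=\PGL(k,q^n)\rtimes \Aut(\Fn)$ combined with both directions of Proposition \ref{propeq} (any two members of $\L_{\C^\sigma}$ are $\PGL$-equivalent, and $\L_{\C^\sigma}$ is closed under $\PGL$) supplies exactly the missing argument. One bookkeeping point you should make explicit: Proposition \ref{propeq} carries the hypothesis $\Omega(\C)^{\perp}\cap\Sigma=\emptyset$, so to apply it to $\L_{\C^\sigma}$ you need $\Omega(\C^\sigma)^{\perp}\cap\Sigma=\emptyset$; this holds because the coordinatewise $\sigma$ fixes $\Sigma$ setwise (as $\sigma$ commutes with the $q$-power Frobenius) and commutes with $\perp$ (since $f^\sigma\cdot g^\sigma=(f\cdot g)^\sigma$), a hypothesis the paper itself leaves implicit in the statement of Proposition \ref{propeq2}.
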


\begin{proof}
Suppose $\C$ and $\C'$ are equivalent $\Fn$-subspaces of $V$. By Proposition \ref{prop:Fnequiv}, we have that $\C' = x^{q^m}\circ \C^{\rho}\circ h$ for some invertible $h$, some integer $m$, and some automorphism $\rho$ of $\F_{q^n}$. For any basis $f_1,\ldots,f_k$ of $\C$, it holds that $x^{q^m}\circ f_1^{\rho}\circ h,\ldots, x^{q^m}\circ f_k^{\rho}\circ h$ is a basis for $\C'$.

Thus
\[
\{((x^{q^m}\circ f_1^{\rho}\circ h)(x),\ldots,(x^{q^m}\circ f_k^{\rho}\circ h)(x))_{q^n}\vert x\in \mathbb{F}_{q^n}^*\} \in \L_{\C'}.
\]

%
%

 Defining $y=h(x)^{\rho^{-1}}$, gives
\[\{(f_1(y)^{q^m\rho},\ldots,f_k(y)^{q^m\rho})_{q^n}\vert y\in \mathbb{F}_{q^n}^*\} \in \L_{\C'}.\]

This is clearly $\PGammaL(k,q^n)$-equivalent to $L_{f_1,\ldots,f_k}\in \L_\C$.
\end{proof}
We will give a geometric reason for Proposition \ref{propeq2} in the next section. 

\begin{remark}
Note that these results do not imply that if $\L_\C$ and $\L_{\C'}$ are $\PGammaL(k,q^n)$-equivalent then $\C$ and $\C'$ are equivalent; this is not always true. See Remark \ref{slechtequiv}, \ref{mis} and \cite{CsajZan}, \cite{CsajMarPol} for examples in the case $k=2$. 
\end{remark}

\subsection{A geometric interpretation}
\label{clou}
In Subsection \ref{omegas}, we have, for a subspace $\C$ of $V$, defined a set of linear sets $\L_\C$. Starting from  the subspace $\C$ of $V$, we will now define a different linear set, as in \cite{Lunardon}. As explained in Subsection \ref{proj}, linear sets can be constructed as projected subgeometries. Using the subgeometry $\Sigma$ of $\PG(V)$, corresponding to the linear maps of rank one, and an $\Fn-$subspace $\C$ of $V$, then
\[
\Sigma/\Omega(\C) \subseteq \PG(n-k-1,q^n).
\] 
defines a linear set $L'$ of rank $n$ in $\PG(n-k-1,q^n)$.

Note that in general $L'$ is not a linear set in $\L_\C$, as $\L_\C$ is a set of linear sets in $\PG(k-1,q^n)$ whereas $L'$ lies in $\PG(n-k-1,q^n)$. However we will now show that there is a geometric connection between the two constructions, our main result of this section.
\begin{theorem}
\label{thm:main}
Let $\C$ be an $\Fn$-subspace of $V$ with $\Omega(\C)^\perp\cap \Sigma=\emptyset$. Then for any $\Fn$-basis $\{ f_1,\ldots,f_k\}$ of $\C$ we have
$$U_{f_1,\ldots,f_k}\cong V_0/\C^\perp,$$
where $V_0$ is an $\Fq$-subspace of $V$ satisfying $\Omega(V_0) = \Sigma$, and 
$$L_{f_1,\ldots,f_k}\cong \Sigma/\Omega(\C)^\perp.$$
Therefore
\[
\Sigma/\Omega(\C)^\perp \in \L_\C.
\]
%
\end{theorem}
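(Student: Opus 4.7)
The plan is to construct the claimed isomorphisms explicitly via the non-degenerate symmetric bilinear form (dot product) on $V=\E$, exploiting the identity $\Omega(\C^\perp)=\Omega(\C)^\perp$ recorded at the end of the Preliminaries.

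First I would fix a natural realisation of $\Sigma$ as an $\Fq$-subspace of $V$: set $V_0:=\{\Tr(\b x):\b\in\Fn\}$. The map $\b\mapsto \Tr(\b x)$ is $\Fq$-linear and injective (a nonzero linearised polynomial of $q$-degree at most $n-1$ cannot vanish identically), so $\dim_{\Fq}V_0=n$. A point count using that every rank-one map is of the form $\a\Tr(\b x)$ and that two such maps agree up to an $\Fn^*$-scalar iff they agree up to an $\Fq^*$-scalar shows $\Omega(V_0)=\Sigma$.

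Next I would translate the hypothesis: the condition $\Sigma\cap\Omega(\C)^\perp=\emptyset$ is equivalent to $V_0\cap \C^\perp=\{0\}$. Any nonzero element of $V_0\cap\C^\perp$ yields a point of $\Sigma\cap\Omega(\C^\perp)=\Sigma\cap\Omega(\C)^\perp$; conversely, any point of $\Sigma\cap\Omega(\C)^\perp$ has a representative in $V_0$, and since $\C^\perp$ is an $\Fn$-subspace this representative automatically lies in $\C^\perp$.

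The heart of the argument is the identification $V/\C^\perp\cong\Fn^k$ given by $g\mapsto (f_1\cdot g,\ldots,f_k\cdot g)$, which is $\Fn$-linear with kernel exactly $\C^\perp$. The key computation is that, for $f_i=\sum_j \alpha_{ij}x^{q^j}$ and $g=\Tr(\b x)=\sum_j \b^{q^j}x^{q^j}\in V_0$,
\[
f_i\cdot \Tr(\b x)=\sum_{j=0}^{n-1}\alpha_{ij}\b^{q^j}=f_i(\b).
\]
Hence composing the inclusion $V_0\hookrightarrow V$ with the quotient $V\to V/\C^\perp\cong \Fn^k$ produces the map $\Tr(\b x)\mapsto (f_1(\b),\ldots,f_k(\b))$, whose image in $\Fn^k$ is precisely $U_{f_1,\ldots,f_k}$. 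Injectivity follows from $V_0\cap\C^\perp=\{0\}$, giving the $\Fq$-linear isomorphism $V_0/\C^\perp\cong U_{f_1,\ldots,f_k}$. Projectivising this $\Fn$-linear identification yields $\Sigma/\Omega(\C)^\perp\cong L_{f_1,\ldots,f_k}$ inside $\PG(k-1,q^n)$, and hence $\Sigma/\Omega(\C)^\perp\in\L_\C$.

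The main obstacle is really just bookkeeping: one must keep the $\Fq$-subspace $V_0$, the $\Fn$-subspace $\C^\perp$, the quotient $V/\C^\perp$, and its coordinatisation separate. The substantive content is the identity $f_i\cdot\Tr(\b x)=f_i(\b)$, which is a one-line expansion of coefficients; it is exactly what makes the dual coordinates on $V/\C^\perp$ induced by the basis $\{f_1,\ldots,f_k\}$ of $\C$ match the evaluation coordinates $(f_1(\b),\ldots,f_k(\b))$ defining $L_{f_1,\ldots,f_k}$.
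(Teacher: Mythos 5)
Your proposal is correct, and while it rests on exactly the same key identity as the paper's proof, it packages the argument differently. Both proofs ultimately reduce to the computation $f\cdot\Tr(\beta x)=f(\beta)$, i.e.\ that pairing a linearised polynomial against the coefficient vector of a rank-one map recovers evaluation. The paper realises the quotient $\Sigma/\Omega(\C)^\perp$ concretely inside $\PG(n-1,q^n)$: it chooses a $(k-1)$-space $\Omega(\C_1)=\langle g_1,\ldots,g_k\rangle_{q^n}$ skew to $\Omega(\C)^\perp$, writes each projected point as a solution of $\sum_i a_i g_i=\bar{\beta}+t$, and then must separately prove that the $k\times k$ matrix $\phi$ with $\phi_{ij}=f_j\cdot g_i$ is non-singular (using the skewness) before applying $\phi^{-1}$ to land on $L_{f_1,\ldots,f_k}$. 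You instead coordinatise the intrinsic quotient $V/\C^\perp$ by $g\mapsto(f_1\cdot g,\ldots,f_k\cdot g)$; restricted to the paper's complement $\C_1$, your coordinatisation \emph{is} the matrix $\phi$, so the paper's invertibility lemma is absorbed into the claim that this map is an isomorphism $V/\C^\perp\cong\Fn^k$ --- for which you should add one line on surjectivity (kernel is $\C^\perp$ of $\Fn$-dimension $n-k$ by non-degeneracy of the form, so the image has dimension $k$), and note that $V_0/\C^\perp$ is to be read as the image of $V_0$ in $V/\C^\perp$, which your correctly established equivalence $\Sigma\cap\Omega(\C)^\perp=\emptyset\iff V_0\cap\C^\perp=\{0\}$ (using $\Omega(\C^\perp)=\Omega(\C)^\perp$ and the $\Fn$-closedness of $\C^\perp$) identifies with $V_0$ itself. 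What your route buys: no arbitrary choice of complement, no matrix inversion, and a direct proof of the vector-space statement $U_{f_1,\ldots,f_k}\cong V_0/\C^\perp$, which the paper's projective argument treats only implicitly. What the paper's route buys: an explicit picture of the projection inside $\PG(n-1,q^n)$, consonant with the projected-subgeometry description of linear sets it uses elsewhere (e.g.\ Proposition \ref{weightproj}). In both treatments the final membership $\Sigma/\Omega(\C)^\perp\in\L_\C$ tacitly uses that different coordinatisations differ by an element of $\PGL(k,q^n)$, which Proposition \ref{propeq} supplies; your appeal to this is implicit but sound.
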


\begin{proof} 

Pick a $(k-1)$-space, say $\Omega(\C_1)=\la  g_1,\ldots,g_{k}\ra_{q^n}$ skew from $\Omega(\C)^\perp$. Then the quotient space $\Sigma/\Omega(\C)^\perp$ is isomorphic to the intersection of the space $\Omega(\C_1)$ with all spaces of the form $\la x,\Omega(\C)^\perp\ra$, where $x\in \Sigma$. We conclude that $\Sigma/\Omega(\C)^\perp$ is isomorphic to the set $M$ of points of the form $\la a_1 g_1+\cdots+a_{k}g_{k}\ra_{q^n}$, with $a_i \in \Fn$ such that $a_1 g_1+\cdots+a_{k}g_{k} = \bar{\beta}+t$ for some $\la\bar{\beta}\ra_{q^n}\in \Sigma,\la t\ra_{q^n}\in \Omega(\C)^\perp$.

Since $\la g_1\ra_{q^n},\ldots,\la g_k\ra_{q^n}$ are different points of $\PG(n-1,q^n)$, the set $M$ contained in $\Omega(\C_1)$ is clearly isomorphic to $M'$ corresponding to the points defined by the vectors from the set 
\[
\left\{(a_1,\ldots,a_k):\sum_{i=1}^k a_i g_i =\bar{\beta}+t, \la\bar{\beta}\ra_{q^n}\in \Sigma,\la t\ra_{q^n}\in \Omega(\C)^\perp\right\}.
\]

Now suppose that 
\begin{align}\sum_{i=1}^k a_i g_i=\bar{\beta}+t
\end{align}
for $a_i\in \F_{q^n}$ and $\la\bar{\beta}\ra_{q^n}\in \Sigma,\la t\ra_{q^n}\in \Omega(\C)^\perp$. Take the scalar product with $f_j$ on both sides of (1) to find
$$\sum_{i=1}^k a_i (f_j\cdot g_i)=f_j\cdot\bar{\beta}+f_j \cdot t$$
Now recall that $f_j\cdot \bar{\beta}=f_j(\beta)$, and that $f_j\cdot t=0$ since $\la t\ra_{q^n}\in \Omega(\C)^\perp$.
Hence we get that
$$\sum_{i=1}^k a_i (f_j\cdot g_i)=f_j(\beta)$$
for all $j$, which we rewrite as
$$\begin{pmatrix}a_1&\cdots&a_k\end{pmatrix}\phi=\begin{pmatrix}f_1(\beta)&\cdots&f_k(\beta)\end{pmatrix},$$

where $\phi$ is the $k\times k$ matrix with $\phi_{ij} = f_j\cdot g_i$.

We show that $\phi$ is non-singular. 
Suppose that this matrix is singular. Then we can find a non-trivial linear combination of its columns that gives the zero row, i.e. we can find  some $\alpha_1,\ldots,\alpha_k \in \F_{q^n}$, not all zero, such that
\[
\sum_{i=1}^k \alpha_i (g_i\cdot f_j) =0
\]
for all $j$. Rearranging this, we find that $f_j \cdot (\sum_{i=1}^k \alpha_i g_i )=0$ for all $j$, implying $\sum_{i=1}^k \alpha_i g_i \in \Omega(\C)^\perp \cap \Omega(\C_1)$, a contradiction since $\Omega(\C)^\perp$ and $\Omega(\C_1)$ are disjoint. Hence $\phi$ is non-singular, as claimed. 

Now applying $\phi^{-1}$ to $M'$, we get that $M'$ is isomorphic to the set of points defined by vectors in the set
\[
\{\begin{pmatrix}f_1(\beta)&\cdots&f_k(\beta)\end{pmatrix} : \beta \in \Fn^\ast\},
\]
which by definition is equal to $L_{f_1,\ldots,f_k}$.

We conclude that $\Sigma/\Omega(\C)^\perp\cong M\cong M'\cong L_{f_1,\ldots,f_k} $, as claimed. As we have only used $\PGL$-equivalences, using Proposition \ref{propeq}, we find that $\Sigma/\Omega(\C)^\perp\in \L_\C$.
\end{proof}

We can use this result to give a geometric proof of part of Proposition \ref{propeq2}.

\begin{proof}[Alternate proof of Proposition \ref{propeq2}] Let $\C_1$ and $\C_2$ be equivalent subspaces. Let $L_1=\Sigma/\Omega(\C_i)^\perp$.
By Theorem \ref{thm:main}, we have that $\Sigma/\Omega(\C_i)^\perp\in \L_{\C_i}$ for $i =1,2$. By Proposition \ref{prop:Fnequiv} $\C_1$ and $\C_2$, are equivalent if and only if they are semilinearly equivalent. It is straightforward to check that $\C_1$ and $\C_2$ are semilinearly equivalent if and only if $\C_1^\perp$ and $\C_2^\perp$ are semilinearly equivalent, in which case there is a collineation $\phi$ of $\PG(n-1,q^n)$ fixing $\Sigma$ and mapping $\Omega(\C_1)^\perp$ to $\Omega(\C_2)^\perp$.


Then clearly
\[
L_1^{\phi}= (\Sigma/\Omega(\C_1)^\perp)^\phi = \Sigma^\phi /(\Omega(\C_1)^\perp)^\phi = \Sigma/\Omega(\C_2)^\perp\in \L_{\C_2},
\]
as claimed. \end{proof}

%
%
%

\begin{remark}
We note again that the converse is not necessarily true. Counterexamples can occur when the cone defined by $\Omega(\C_1)$ and $\Sigma$ contains other subgeometries. Counterexamples are known only in the case $k=2$. However we do have the following generalisation of \cite[Theorem 8]{Sheekey}.
\end{remark}

\begin{proposition}\label{equi}

Suppose $\C_1,\C_2$ are $\Fn$-subspaces of $V\simeq \E$. Then $U_{\C_1}$ and $U_{\C_2}$ are $\GammaL(k,q^n)$-equivalent if and only if $\C_1$ and $\C_2$ are equivalent.
\end{proposition}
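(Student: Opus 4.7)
The plan is to handle the two directions asymmetrically: the forward direction is essentially a reformulation of the computation inside the alternate proof of Proposition \ref{propeq2}, while the reverse direction is the substantive content and requires reconstructing a code-equivalence from a $\GammaL$-equivalence of the $U$'s. Throughout I will assume the standing hypothesis $\Omega(\C_i)^\perp \cap \Sigma = \emptyset$, i.e.\ that the $f_i$'s and $f'_j$'s have only the trivial common zero, which by Proposition \ref{omegabot} ensures that both $U_{\C_i}$ have $\Fq$-rank exactly $n$ and that the evaluation parametrisations are injective.

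For the forward direction, assume $\C_1$ and $\C_2$ are equivalent; Proposition \ref{prop:Fnequiv} gives $\C_2 = x^{q^m} \circ \C_1^\rho \circ h$ with $h$ invertible, $m$ an integer and $\rho \in \Aut(\Fn)$. Picking a basis $\{f_i\}$ of $\C_1$, the list $\{x^{q^m} \circ f_i^\rho \circ h\}$ is a basis of $\C_2$, and a direct substitution $z = h(x)^{\rho^{-1}}$ combined with the easily verified identity $f_i^\rho(z^\rho) = f_i(z)^\rho$ rewrites the corresponding subspace as $U_{\C_2} = \{(f_i(z)^{\rho q^m})_i : z \in \Fn\}$, which is the coordinatewise image of $U_{\C_1}$ under the field automorphism $\tau = \rho \circ (x\mapsto x^{q^m})$. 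Since $\tau$ is a $\GammaL(k,q^n)$-element (and any other basis choice contributes only a $\GL(k,q^n)$-change of coordinates), the forward implication follows.

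For the reverse direction, suppose $\psi(U_{\C_1}) = U_{\C_2}$ for some $\psi \in \GammaL(k,q^n)$, and decompose $\psi = A\cdot\sigma$ with $A \in \GL(k,q^n)$ and $\sigma \in \Aut(\Fn)$ acting coordinatewise. Fixing bases $\{f_i\}$, $\{f'_j\}$, the condition $\psi(f_1(x),\dots,f_k(x)) \in U_{\C_2}$ combined with injectivity of $y\mapsto (f'_j(y))_j$ defines a bijection $h: \Fn \to \Fn$ by
\[
A \cdot (f_i(x)^\sigma)_i^{T} = (f'_j(h(x)))_j^{T}.
\]
Applying $f_i(x)^\sigma = f_i^\sigma(x^\sigma)$ and the substitution $u = x^\sigma$, $h'(u) := h(u^{\sigma^{-1}})$, rewrites this as $\sum_i A_{ji}\, f_i^\sigma(u) = f'_j(h'(u))$ for all $u \in \Fn$ and $j$. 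The left-hand side is $\Fq$-linear in $u$, so $(f'_j \circ h')_j$ is $\Fq$-linear; since $(f'_j)_j : \Fn \to \Fn^k$ is an injective $\Fq$-linear map, applying it to $h'(u_1+u_2)-h'(u_1)-h'(u_2)$ and to $h'(\lambda u)-\lambda h'(u)$ for $\lambda \in \Fq$ both yield zero, forcing $h'$ itself to be $\Fq$-linear, hence representable by a linearised polynomial, and invertible because $\psi$ is bijective. The displayed equation now reads $\C_2 \circ h' = \C_1^\sigma$ in $\E$, i.e.\ $\C_2 = \C_1^\sigma \circ (h')^{-1}$, which is precisely an equivalence of rank-metric codes.

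The main obstacle is the linearity step for $h'$ in the reverse direction; this is where the hypothesis on common zeros is essential, since without injectivity of $(f'_j)_j$ the map $h'$ is not even well-defined as a function, let alone $\Fq$-linear. Everything else reduces to bookkeeping with the identity $f^\sigma(x^\sigma) = f(x)^\sigma$ and to interpreting the matrix equation $\sum_i A_{ji} f_i^\sigma = f'_j \circ h'$ as a statement about $\Fn$-spans in $\E$.
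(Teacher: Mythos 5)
Your proof is correct, and it is worth noting that the paper itself offers no argument here: Proposition \ref{equi} is stated bare, presented only as ``the following generalisation of \cite[Theorem 8]{Sheekey}'', and the text moves straight on to a definition. So you are supplying a proof where the paper has none, and what you supply is exactly the natural extension of Sheekey's $k=2$ argument. Your forward direction reproduces, at the level of the subspaces $U_{f_1,\ldots,f_k}$ rather than the linear sets, the computation the paper does carry out in proving Proposition \ref{propeq2}: invoke Proposition \ref{prop:Fnequiv} to reduce to semilinear equivalence, then substitute $z=h(x)^{\rho^{-1}}$ and use $f^{\rho}(z^{\rho})=f(z)^{\rho}$ (which is correct for the paper's convention $f^{\rho}(x)=(f(x^{\rho^{-1}}))^{\rho}$). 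The reverse direction is the substantive part, and your chain of reductions is sound: $h$ is well defined and bijective because both evaluation parametrisations are injective and $\psi$ is bijective; $f'_j\circ h'$ agrees pointwise with the $\Fq$-linear map $\sum_i A_{ji}f_i^{\sigma}$, so injectivity of $(f'_j)_j$ forces $h'$ itself to be $\Fq$-linear, hence an invertible linearised polynomial; and since linearised polynomials are determined by the maps they induce, the pointwise identity becomes $\sum_i A_{ji}f_i^{\sigma}=f'_j\circ h'$ in $\E$, whence $\C_2\circ h'=\C_1^{\sigma}$ as $\Fn$-spans (using that $A$ is invertible and $\sigma$ carries $\Fn$-subspaces to $\Fn$-subspaces), which is an equivalence of codes with $g=x$ and $k=0$. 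Two minor points: the nondegeneracy hypothesis $\Omega(\C_i)^{\perp}\cap\Sigma=\emptyset$ that you impose is absent from the statement but is the standing assumption of the section (it appears in Theorem \ref{thm:main} and Proposition \ref{propeq}), and you are right that without it $h$ is not even well defined, so flagging it explicitly is a virtue rather than a defect; and since $\U_{\C}$ is a set of subspaces, the statement's ``$U_{\C_1}$ and $U_{\C_2}$ are $\GammaL(k,q^n)$-equivalent'' needs the observation that different basis choices differ only by $\GL(k,q^n)$, which you make.
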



We conclude this section with a definition.

\begin{definition} The {\it companion} of the set $\U_\C$ of $n$-dimensional $\Fq$-subspaces of  $V(k,q^n)$ is the set $\U_{\C^\perp}$ of  $n$-dimensional $\Fq$-subspaces of  $V(n-k,q^n)$.
\end{definition}

\begin{remark}\label{mis} The corresponding statement for linear sets does not appear to be well-defined. If $\C$ and $\C'$ are inequivalent subspaces but define $\PGammaL$-equivalent linear sets, their duals do not necessarily define $\PGammaL$-equivalent linear sets.
Let $\C_1 = \pt{x,x^q}$ and $\C_2=\pt{x,x^{q^2}}$ be subspaces in $\PG(4,q^5)$, then $\C_1$ and $\C_2$ are inequivalent, but $L_{x,x^q}=L_{x,x^{q^2}}$, and hence, $L_{x,x^{q^2}} \in \L_{\C_1}$ (see \cite{CsajZan}). We have $\C_1^\perp = \pt{x^{q^2},x^{q^3},x^{q^4}}$ and $\C_2^\perp=\pt{x^{q},x^{q^3},x^{q^4}}$. It is not hard too check (e.g. using the GAP package FinInG \cite{fining}) that $L_{x^{q^2},x^{q^3},x^{q^4}}$ and $L_{x^{q},x^{q^3},x^{q^4}}$ are not $\PGammaL$-equivalent for $q=2,3,4$, and hence, $L_{x^{q},x^{q^3},x^{q^4}}\notin \L_{{\C_1}^\perp}$.


However, we will show in the next section that there is a correspondence between the weight distribution of points and hyperplanes in the linear sets in $\L_\C$ and $\L_{\C^\perp}$. So, even though one can take a linear set in $\L_{\C_1^\perp}$ and one in $\L_{\C_2^\perp}$ which are not equivalent, if $\C_1$ and $\C_2$ are equivalent, these will have the same weight distribution.

\end{remark}

\section{The extension of Sheekey's connection between scattered linear sets and MRD codes}\label{new}

Recall that if $\C\subset V$ is an $\F_{q^n}$-linear rank metric code, $\Omega(\C)$ is a subspace of $\PG(n-1,q^n)$. We now relate the rank distribution of the subspace $\Omega(\C)$ in $\PG(n-1,q^n)$ to properties of a linear set in $\L_\C$.

\begin{definition}
The {\it rank distribution} of a subset $\Omega(\C)$ is defined as the vector that has the number of points of $\Omega(\C)$ with rank $i$ on the $i$-th position, i.e., 
\[
rk(\Omega(\C)) := (v_1,\ldots,v_n),\ \mathrm{ with\ } v_i=\mathrm{ number\ of\ points\ of\ }\Omega(\C)\ \mathrm{ with\ rank\ }i.
\]
\end{definition}
\begin{definition}

The {\it weight distribution with respect to $s$-subspaces} of an $\Fq$-linear set $L(U)$ in $\PG(k-1,q^n)$ is defined as the vector
\[
w_s(L(U)) := (v_1,\ldots,v_n),\ \mathrm{ with\ } v_i=\mathrm{ number\ of\ } s-\mathrm{ spaces\ of\ }\PG(k-1,q^n)\ \mathrm{ with\ }wt_{L(U)}(\pi)=i.
\]

We call $w_0(L)$ the {\it weight distribution of $L(U)$}, and we call $w_{k-2}(L(U))$ the {\it weight distribution of $L$ with respect to hyperplanes}.
\end{definition}

We see that $L(U)$ is scattered if $w_0(L)$ has all its entries in $\{0,1\}$.


In \cite{Sheekey}, the following was shown (adapted to the notation of this paper).
\begin{theorem}
Let $\C$ be a two-dimensional $\Fn$-subspace of $V$ (or, equivalently, $\Omega(\C)$ a line in $\PG(n-1,q^n)$. Then a linear set $L\in L_\C$ is scattered if and only if $\C$ is disjoint from $\Sigma_{n-2}$, i.e. if and only if $\C$ is a two-dimensional $\Fn$-linear maximum rank-distance code.
\end{theorem}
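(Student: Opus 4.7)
The approach is to combine Theorem \ref{thm:main}, which identifies a representative of $\L_\C$ with the projected subgeometry $\Sigma/\Omega(\C)^\perp$, with the weight formula of Proposition \ref{weightproj}, thereby translating the scattered condition on $L$ into a rank condition on the linear maps represented by $\Omega(\C)$.

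First, by Theorem \ref{thm:main} we may take $L = \Sigma/\Omega(\C)^\perp$, obtained by projecting $\Sigma$ from $\Lambda^* := \Omega(\C)^\perp$ (a projective subspace of dimension $n-3$) onto a complementary line. Every point $\pi$ of $L$ then corresponds uniquely to the hyperplane $\la \pi,\Lambda^*\ra$ of $\PG(n-1,q^n)$: it contains $\Lambda^*$ and meets $\Sigma$.

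Next, since $\perp$ is an involution on $\PG(n-1,q^n)$, hyperplanes containing $\Lambda^* = \Omega(\C)^\perp$ are in bijection with points of $(\Lambda^*)^\perp = \Omega(\C)$; explicitly, every such hyperplane has the form $H_f := \la f\ra_{q^n}^\perp$ for a unique $\la f\ra_{q^n}\in\Omega(\C)$. A point $\la(\beta,\beta^q,\ldots,\beta^{q^{n-1}})\ra_{q^n}$ of $\Sigma$ lies in $H_f$ precisely when $\sum_i f_i\beta^{q^i} = f(\beta) = 0$, so $H_f\cap\Sigma$ corresponds, through the subgeometry structure, to $\ker f$, an $\Fq$-subspace of $\Fq$-vector-space dimension $n-\rank(f)$ and hence of projective dimension $n-\rank(f)-1$. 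Proposition \ref{weightproj} then yields
\[
wt_L(\pi)\;=\;1+(n-\rank(f)-1)\;=\;n-\rank(f).
\]
Consequently the point $\pi_f$ lies in $L$ precisely when $f$ is non-invertible (so that $H_f\cap\Sigma\ne\emptyset$), and in that case $wt_L(\pi_f)=1$ if and only if $\rank(f)=n-1$. Therefore $L$ is scattered if and only if no $\la f\ra_{q^n}\in\Omega(\C)$ has $\rank(f)\le n-2$, that is, $\Omega(\C)\cap\Sigma_{n-2}=\emptyset$; since $\dim_{\Fn}\C=2$, this is exactly the condition that $\C$ is a two-dimensional $\Fn$-linear MRD code (minimum rank distance $n-k+1 = n-1$).

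The main obstacle I anticipate is keeping the two layers of duality straight: the quotient description $L=\Sigma/\Omega(\C)^\perp$ from Theorem \ref{thm:main} realises points of $L$ as hyperplanes through $\Omega(\C)^\perp$, and then the $\perp$ duality on $\PG(n-1,q^n)$ identifies these hyperplanes with points of $\Omega(\C)$. One must also carefully convert between $\Fq$-vector-space dimensions and projective dimensions when invoking Proposition \ref{weightproj}. Once those identifications are handled, the translation of $wt_L(\pi_f)$ into $n-\rank(f)$ and hence the equivalence with the MRD condition is essentially immediate.
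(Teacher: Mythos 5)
Your argument is correct, and it reaches the result by a genuinely different route from the paper's. The paper obtains this theorem as the $k=2$ specialisation of Proposition \ref{weightrank}, which is proved by a direct coordinate computation in the model $L_{f_1,f_2}$: the point $\la (f_1(x_0),f_2(x_0))\ra_{q^n}$ lies on the ``hyperplane'' (for $k=2$: point) $(a_1,a_2)^\perp$ exactly when $x_0\in\ker(a_1f_1+a_2f_2)$, so each point of $\PG(1,q^n)$ has weight $n-\rank(a_1f_1+a_2f_2)$, and scatteredness is at once equivalent to $\Omega(\C)\cap\Sigma_{n-2}=\emptyset$, i.e.\ to the MRD property. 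You never touch this coordinate model: you pass through Theorem \ref{thm:main} to realise a representative of $\L_\C$ as $\Sigma/\Omega(\C)^\perp$, note (valid precisely because $k=2$, so $\Omega(\C)^\perp$ has projective dimension $n-3$, one less than a hyperplane) that points of the target line correspond to the hyperplanes through $\Omega(\C)^\perp$, use the duality $\perp$ to write these as $H_f=\la f\ra_{q^n}^\perp$ with $\la f\ra_{q^n}\in\Omega(\C)$, identify $H_f\cap\Sigma$ with $\PG(\ker f)$ via $f\cdot \bar\beta=f(\beta)$, and let Proposition \ref{weightproj} deliver the same formula $wt_L(\pi_f)=n-\rank(f)$; your vector-versus-projective dimension bookkeeping is correct, including the case $\rank(f)=n$ where the intersection is empty and $\pi_f\notin L$. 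In effect you give a dual, quotient-side derivation of Proposition \ref{weightrank} for $k=2$ --- the same mechanism the paper itself uses later to prove Proposition \ref{BW}. The trade-off: your route is geometrically transparent and explains \emph{why} point weights of $L$ read off ranks on $\Omega(\C)$, but it requires the hypothesis of Theorem \ref{thm:main} together with Proposition \ref{propeq} to transfer scatteredness to every member of $\L_\C$, whereas the paper's computation is basis-level, needs no detour through the subgeometry, and generalises verbatim to all $k$ via ``scattered with respect to hyperplanes''.

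One point you should make explicit: the hypothesis $\Omega(\C)^\perp\cap\Sigma=\emptyset$ --- equivalently, that $f_1,f_2$ have no common nonzero root, so that $L$ has rank $n$ --- is not literally in the theorem statement, but it is genuinely needed and is implicit there (as in \cite{Sheekey}, which concerns linear sets of rank $n$); the paper's own proof via Proposition \ref{weightrank} assumes it tacitly as well. Without it the equivalence fails: for $n$ odd, $\C=\la x^q-x,\,x^{q^2}-x\ra_{q^n}$ has common kernel $\Fq$, every nonzero element has kernel of dimension at most $2$, so all point weights $n-1-\rank(f)$ lie in $\{0,1\}$ and the associated rank-$(n-1)$ linear set is scattered; yet taking $y_0\neq 0$ with $\Tr(y_0)=0$ and $a=-1-y_0^{q-1}$, the element $x^{q^2}+ax^q-(a+1)x=(y^q+(a+1)y)\circ(x^q-x)$ has rank $n-2$, so $\Omega(\C)$ meets $\Sigma_{n-2}$ and $\C$ is not MRD. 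State the nondegeneracy assumption at the outset; with it, your proof is complete.
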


We now aim to extend this correspondence to subspaces $\Omega(\C)$ of larger dimension. However it is not true in general to say that if a linear set $L\in L_\C$ is scattered, then $\C$ is maximum rank-distance, as we will illustrate in a later example. In order to characterise linear sets arising from $\Fn$-linear MRD codes, we introduce a new concept for linear sets; that of being {\it scattered with respect to hyperplanes}.

\begin{definition}
An $\F_q$-linear set $L(U)$ of rank $n$ in $\PG(k-1,q^n)$, with $\dim\la L(U)\ra =k-1$,
 is said to be {\it scattered with respect to hyperplanes} if the rank of $L(U)\cap H$ is at most $k-1$ for all hyperplanes $H$ of $\PG(k-1,q^n)$.
\end{definition} 
 It is easy to see that an $\F_q$-linear set which is scattered with respect to hyperplanes is necessarily a scattered linear set. Moreover, for every linear set $L$ of rank $n$ in $\PG(k-1,q^n)$, it is easy to find some hyperplane meeting $L$ in a linear set of rank at least $k-1$. Hence, $k-1$ is the smallest possible upper bound for the rank of the intersection of a linear set of rank $n$ with a hyperplane of $\PG(k-1,q)$.

\begin{proposition}\label{weightrank}
Let $L_{f_1,\ldots,f_k} \in L_\C$ be a linear set in $\PG(k-1,q^n)$ defined by a subspace $\Omega(\C) = \langle f_1,\ldots,f_k\rangle_{q^n}\leq \PG(n-1,q^n)$ of dimension $k-1$. Then for any hyperplane $H = (a_1,\ldots,a_k)^{\perp}$, we have that
\[
wt_{L_{f_1,\ldots,f_k}}(H)  = n - \rank\left(\sum_{i=1}^k a_i f_i\right).
\]
\end{proposition}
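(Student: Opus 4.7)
The plan is to unfold the definition of hyperplane weight and reduce the intersection in question to the kernel of a single $\Fq$-linear map, whose dimension is controlled by rank-nullity.

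First, I will identify the subspace $W$ of $V(k,q^n)$ corresponding to the hyperplane $H=(a_1,\ldots,a_k)^\perp$, namely
\[
W = \{(v_1,\ldots,v_k)\in \Fn^k : a_1 v_1 + \cdots + a_k v_k = 0\}.
\]
By Definition \ref{defwt} applied to $U = U_{f_1,\ldots,f_k}$, the weight $wt_{L_{f_1,\ldots,f_k}}(H)$ is the $\Fq$-dimension of $U\cap W$, which by construction equals
\[
U \cap W = \left\{(f_1(x),\ldots,f_k(x)) : x\in \Fn,\ \sum_{i=1}^{k} a_i f_i(x) = 0\right\}.
\]

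Next, I will argue that the $\Fq$-linear map $\psi : \Fn \to \Fn^k$, $x \mapsto (f_1(x),\ldots,f_k(x))$, is injective. This is the hypothesis that $L_{f_1,\ldots,f_k}$ has rank $n$, which by Lemma \ref{linsetpol} is equivalent to $\ker(f_1)\cap\cdots\cap\ker(f_k)=\{0\}$; equivalently, via Proposition \ref{omegabot}, to $\Omega(\C)^\perp\cap\Sigma=\emptyset$, which is guaranteed since $L_{f_1,\ldots,f_k}$ is assumed to be an element of $\L_\C$. Consequently, the map $\psi$ restricts to an $\Fq$-isomorphism between $\{x\in \Fn : \sum a_i f_i(x)=0\}$ and $U\cap W$, giving
\[
\dim_{\Fq}(U\cap W) = \dim_{\Fq} \ker\Bigl(\textstyle\sum_{i=1}^{k} a_i f_i\Bigr).
\]

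Finally, applying the rank-nullity theorem to the $\Fq$-linear endomorphism $\sum_{i=1}^{k} a_i f_i$ of $\Fn$ yields
\[
\dim_{\Fq} \ker\Bigl(\textstyle\sum_{i=1}^{k} a_i f_i\Bigr) = n - \rank\Bigl(\textstyle\sum_{i=1}^{k} a_i f_i\Bigr),
\]
which combined with the previous identities gives the claim. There is no real obstacle here; the only subtle point is the injectivity of $\psi$, which is a direct consequence of the standing assumption that our linear set has rank $n$, so the proof is essentially an unwinding of definitions followed by rank-nullity.
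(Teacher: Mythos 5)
Your proof is correct and follows essentially the same route as the paper's: both reduce membership of a point of $L_{f_1,\ldots,f_k}$ in $H=(a_1,\ldots,a_k)^\perp$ to the condition $x\in\ker\bigl(\sum_{i=1}^k a_i f_i\bigr)$ and then apply rank--nullity. If anything, you are slightly more careful than the paper, which passes from the kernel to the weight without explicitly noting the injectivity of $x\mapsto(f_1(x),\ldots,f_k(x))$ that your rank-$n$ observation supplies.
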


\begin{proof}
Let $P = \langle(f_1(x_0),\ldots,f_k(x_0))\rangle_{q^n}$, with $x_0\neq 0 \in \Fn$ be a point of $L_{f_1,\ldots,f_k}$. Then $P\in H$ if and only if $a_1f_1(x_0)+\cdots+a_kf_k(x_0)=0$ 
, which occurs if and only if $x_0\in \ker(a_1f_1+\cdots a_kf_k)$. Hence $L_{f_1,\ldots,f_k}$ defines a linear set such that the weight of the hyperplane $H$ with respect to it is rank $\dim(\ker(a_1f_1+\cdots a_kf_k)) = n-\rank\left(\sum_{i=1}^k a_i f_i\right)$, as claimed.
\end{proof}

We get from the previous proposition that $w_{k-2}(L_{f_1,f_2,\ldots,f_k})=n.{\bf 1}-rk(\Omega(\C))$. As the right hand side is independent of the basis $f_1,\ldots,f_k$ we have chosen for $\C$, we find:

\begin{corollary}
The weight distribution of a linear set $L\in \L_\C$ with respect to hyperplanes is determined by the rank distribution of points in $\Omega(\C)$:
\[
w_{k-2}(L)= n.{\bf 1}-rk(\Omega(\C)),
\]
where $\bf{1}$ is the all-one vector.
\end{corollary}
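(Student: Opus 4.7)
The plan is to derive this as an essentially immediate counting consequence of Proposition \ref{weightrank}, together with the fact that fixing an $\Fn$-basis of $\C$ produces a bijection between the hyperplanes of $\PG(k-1,q^n)$ and the points of $\Omega(\C)$.

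First, fix an $\Fn$-basis $\{f_1,\ldots,f_k\}$ of $\C$ and work with the corresponding representative $L = L_{f_1,\ldots,f_k}\in \L_\C$. The hyperplanes $H$ of $\PG(k-1,q^n)$ are parametrized (bijectively) by the projective points $(a_1,\ldots,a_k)_{q^n}$ of the dual projective space via $H=(a_1,\ldots,a_k)^\perp$. Proposition \ref{weightrank} then reads
\[
wt_L(H) \;=\; n - \rank\!\left(\sum_{i=1}^{k} a_i f_i\right).
\]

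Next I use that $\{f_1,\ldots,f_k\}$ is an $\Fn$-basis of $\C$: the assignment $(a_1,\ldots,a_k)\mapsto \sum_{i=1}^{k} a_i f_i$ is an $\Fn$-linear isomorphism $\Fn^k\to \C$, and it therefore induces a bijection between the projective points of the dual space (equivalently, the hyperplanes of $\PG(k-1,q^n)$) and the projective points of $\Omega(\C)\subset \PG(n-1,q^n)$. Under this bijection a hyperplane $H$ of weight $i$ corresponds, by the displayed formula, to a point of $\Omega(\C)$ of rank $n-i$. Counting on both sides yields the identity coordinate-wise, which is precisely the shorthand $w_{k-2}(L)= n\cdot \mathbf{1}-rk(\Omega(\C))$ (indexing $i \leftrightarrow n-i$).

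Finally I need to observe that this conclusion genuinely assigns a weight distribution to the whole class $\L_\C$, not just to the chosen representative. This is immediate: by Proposition \ref{propeq} any two elements of $\L_\C$ are $\PGL(k,q^n)$-equivalent, and the weight distribution with respect to hyperplanes is invariant under $\PGL(k,q^n)$; alternatively, the right-hand side $n\cdot\mathbf{1}-rk(\Omega(\C))$ depends only on $\C$ and not on the chosen basis $\{f_1,\ldots,f_k\}$, so the formula automatically holds for every $L\in \L_\C$. There is no serious obstacle here — the whole corollary is a direct repackaging of Proposition \ref{weightrank} via this bijection, so the only thing to be careful about is the correct indexing convention between rank and weight.
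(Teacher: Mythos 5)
Your proof is correct and takes essentially the same route as the paper, which likewise reads the identity directly off Proposition \ref{weightrank} via the parametrization of hyperplanes $H=(a_1,\ldots,a_k)^\perp$ by points of $\Omega(\C)$, and then notes that the right-hand side is independent of the chosen basis of $\C$, so the statement holds for every $L\in\L_\C$. Your additional care about the bijection induced by $(a_1,\ldots,a_k)\mapsto\sum_i a_i f_i$ and the $i\leftrightarrow n-i$ indexing convention simply makes explicit what the paper leaves implicit.
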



We get from the previous corollary that the linear set $L$ is scattered with respect to hyperplanes if and only if all the points of $\Omega(\C)$ have rank at least $n-k+1$, which leads to the following statement. Note that when $k=2$, points are hyperplanes, and so setting $k=2$ returns the construction of the MRD codes of dimension 2 given in \cite[Section 5 ]{Sheekey}.

\begin{corollary}
A linear set $L\in \L_\C$ with $\dim \langle L\rangle=k-1$ is scattered with respect to hyperplanes if and only if $\Omega(\C)$ is disjoint from $\Sigma_{n-k}$ in $\PG(n-1,q^n)$, if and only if $\C$ is an $\Fn$-linear MRD code with minimum distance $n-k+1$.
\end{corollary}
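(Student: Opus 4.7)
The plan is to read the statement off directly from the preceding corollary together with the definitions, so the argument is essentially bookkeeping rather than a new idea. I would proceed in three short chained equivalences.

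First, I would unfold the definition of scattered with respect to hyperplanes: $L = L_{f_1,\ldots,f_k}$ has this property precisely when every hyperplane $H$ satisfies $wt_L(H) \le k-1$. By Proposition \ref{weightrank}, writing $H = (a_1,\ldots,a_k)^\perp$, this weight equals $n - \rank(\sum_i a_i f_i)$. Thus $L$ is scattered w.r.t.\ hyperplanes iff
\[
\rank\!\left(\sum_{i=1}^k a_i f_i\right) \ge n-k+1
\]
for every choice of $(a_1,\ldots,a_k) \in \Fn^k \setminus \{0\}$. Since $\{f_1,\ldots,f_k\}$ is an $\Fn$-basis of $\C$, the maps $\sum_i a_i f_i$ range exactly over the nonzero elements of $\C$, i.e.\ over representatives of all points of $\Omega(\C)$.

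Second, I would translate the rank condition into a statement about $\Sigma_{n-k}$. By its definition, $\Sigma_{n-k}$ is the set of points in $\PG(n-1,q^n)$ corresponding to maps of rank at most $n-k$. Hence the condition $\rank(f) \ge n-k+1$ for every nonzero $f\in \C$ is precisely $\Omega(\C) \cap \Sigma_{n-k} = \emptyset$, giving the first equivalence.

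Third, for the MRD equivalence, I would invoke the definition of an MRD code together with the proposition characterising MRD codes via $\Sigma_{n-k}$ stated earlier. Since $\C$ is a $k$-dimensional $\Fn$-subspace of $V$, it has size $q^{nk}$, and since it is additively closed, it is an MRD code of minimum distance $n-k+1$ exactly when $\Omega(\C) \cap \Sigma_{n-k} = \emptyset$. Stringing the equivalences together gives the claim.

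There is no real obstacle here; the only mild subtlety is remembering that $\L_\C$ is a $\PGL(k,q^n)$-equivalence class (Proposition \ref{propeq}), so the property of being scattered with respect to hyperplanes, being preserved under $\PGL$, depends only on $\C$ rather than on the particular representative $L \in \L_\C$. I would mention this briefly so that the phrase ``a linear set $L\in \L_\C$'' in the statement is unambiguous.
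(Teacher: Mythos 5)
Your proposal is correct and takes essentially the same route as the paper, which derives this corollary in one line from Proposition \ref{weightrank} (via the identity $w_{k-2}(L)=n\cdot\mathbf{1}-rk(\Omega(\C))$) combined with the definitions of $\Sigma_{n-k}$ and of $\Fn$-linear MRD codes, exactly the chain of equivalences you spell out. Your closing remark that the property is well defined on the $\PGL(k,q^n)$-class $\L_\C$ is a small point the paper leaves implicit, but it is not a different argument.
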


\begin{remark}
An example for which a linear set $L(\C)$ is scattered, but not scattered with respect to hyperplanes, is the linear set
\[
L_1 = \{(x,x^q,\Tr(x))_{q^n}:x \in \Fn^\ast\}\subseteq \PG(2,q^n),
\]
where $n>3. $This is scattered, since
\[
(x,x^q,\Tr(x))_{q^n} = (y,y^q,\Tr(y))_{q^n} \Leftrightarrow x/y \in \Fq.
\]
However the line $(0,0,1)^\perp$ meets $L$ in the linear set
\[
 \{(x,x^q,0)_{q^n}:x \in \Fn^\ast, \Tr(x) = 0\},
\]
which is a linear set of rank $n-1>k-1=2$. The subspace $\Omega_1=\langle 1, x^q,\Tr\rangle_{q^n}$ is clearly not disjoint from $\Sigma_{n-3}$, since it contains the point $\langle \Tr\rangle_{q^n}\in \Sigma$.



We see that for $n\geq 4$, the scattered linear set (considered in \cite{BlLa2000})
\[
L_2 = \{(x,x^q,x^{q^2})_{q^n}:x \in \Fn^\ast\}\subseteq \PG(2,q^n),
\]
is both scattered and scattered with respect to hyperplanes, hence $L_2$ corresponds to an $\Fn$-linear MRD code of minimum distance $n-k+1=n-2$. In this case the corresponding MRD code is a Gabidulin code. The subspace $\Omega_2=\langle x,x^q,x^{q^2}\rangle_{q^n}$ is disjoint from $\Sigma_{n-3}$.

For $n=4$, the  linear set 
\[
L_3 = \{(x,x^{q^2},\Tr(x))_{q^4}:x \in \F_{q^4}^\ast\}\subseteq \PG(2,q^4),
\]
is neither scattered nor scattered with respect to hyperplanes. Again, it is easy to see that $\Omega_3=\langle x,x^{q^2},\Tr(x)\rangle_{q^4}$ is not disjoint from $\Sigma{1}$ as it contains $\Tr$.

For $n=4$, every linear set of rank $4$ in $\PG(2,q^4)$ spanning $\PG(2,q^4)$ is equivalent to one these three examples $L_1,L_2,L_3$: this is because the dual of a plane in $\PG(3,q^4)$ is a point, and equivalence classes of points in $\PG(n-1,q^n)$ are precisely the sets $\Omega_i$. The three examples $L_1$, $L_2$, $L_3$ here arise from the duals of a point of rank $3$, $4$, and $2$ respectively.
\end{remark}

\section{MacWilliams identities for rank-metric codes and duality for weight distributions}\label{mac}\label{companion}
\subsection{MacWilliams identities}

We recall the result of Delsarte \cite{Delsarte} extending the classical MacWilliams identities for linear codes to rank metric codes. We state instead the following more convenient recursion from \cite{Ravagnani}. Though in this paper we require only the case $m=n$, we state the more general result.

\begin{theorem}\cite[Corollary 33]{Ravagnani}\label{ravagnani} Let $\C$ be an $\Fq$-linear subspace of $Mat(k\times m,\Fq)$.
Let $A_i$ denote the number of codewords of rank $i$ in $\C$ and let $B_i$ denote the number of codewords of rank $i$ in $\C^\perp$. 
Put
$$a_{\nu}^k=\frac{q^{m\nu}}{|\C|}\sum_{i=0}^{k-\nu}A_i\left[\begin{array}{c}k-i\\ \nu\end{array}\right],$$
where the square brackets denote the Gaussian coefficient.

Then the $B_j$'s are given by the recursive formula 

\begin{align*}
\begin{cases}
B_0&=1\\
B_\nu&=a_{\nu}^k-\sum_{j=0}^{\nu-1}B_j\left[\begin{array}{c}k-j\\ \nu-j\end{array}\right]\\
B_\nu&=0 \quad \mathrm{for}\ \nu>k.
\end{cases}
\end{align*}
\end{theorem}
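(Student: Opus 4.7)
The plan is to prove the closed-form identity
$$a_\nu^k \;=\; \sum_{j=0}^{\nu} B_j \left[\begin{array}{c}k-j\\ \nu-j\end{array}\right],\qquad (\star)$$
from which the stated recursion falls out by isolating the $j=\nu$ summand: since $\left[\begin{array}{c}k-\nu\\ 0\end{array}\right]=1$, this summand is exactly $B_\nu$, and the resulting triangular system in the $B_j$'s inverts to the displayed formula. The boundary condition $B_\nu=0$ for $\nu>k$ is automatic because matrices in $Mat(k\times m,\Fq)$ have rank at most $k$.

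To establish $(\star)$ I would use the standard Delsarte Fourier-analytic setup on the additive group $Mat(k\times m,\Fq)$. Fix a nontrivial additive character $\psi$ of $\Fq$ and, for $A,B\in Mat(k\times m,\Fq)$, set $\chi_B(A):=\psi(\Tr(AB^T))$. Since the pairing $(A,B)\mapsto\Tr(AB^T)$ is non-degenerate, Pontryagin duality yields Poisson summation,
$$\sum_{A\in\C^\perp}f(A)\;=\;\frac{1}{|\C|}\sum_{B\in\C}\hat f(B),\qquad \hat f(B):=\sum_A f(A)\chi_B(A),$$
for any complex-valued function $f$ on $Mat(k\times m,\Fq)$. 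I would apply this with the test function $f(A):=\left[\begin{array}{c}k-\rank A\\ \nu-\rank A\end{array}\right]$, which equals the number of $\nu$-dimensional subspaces $W\leq\Fq^k$ containing the column space of $A$. Partitioning $\C^\perp$ by rank, the left-hand side of Poisson becomes exactly the right-hand side of $(\star)$.

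The real work is identifying the right-hand side of Poisson with $a_\nu^k$. Swapping the order of summation gives
$$\hat f(B)\;=\;\sum_{\substack{W\leq \Fq^k\\ \dim W=\nu}}\ \sum_{A\,:\,\mathrm{colsp}(A)\subseteq W}\psi(\Tr(AB^T)).$$
For each $W$, the inner indexing set is an $\Fq$-subspace of $Mat(k\times m,\Fq)$ of dimension $\nu m$, so by character orthogonality the inner sum is $q^{\nu m}$ if $B$ annihilates it under $\Tr(\cdot\,\cdot^T)$ and is $0$ otherwise. A short change-of-basis computation (writing $A=PC$ for $P$ a $k\times\nu$ basis matrix of $W$) shows this annihilation is equivalent to $\mathrm{colsp}(B)\subseteq W^\perp$, with $W^\perp$ the ordinary orthogonal complement of $W$ in $\Fq^k$. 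Counting $\nu$-dimensional $W$ whose complement contains the $\rank(B)$-dimensional subspace $\mathrm{colsp}(B)$, and applying the $q$-binomial symmetry $\left[\begin{array}{c}k-r\\ k-\nu-r\end{array}\right]=\left[\begin{array}{c}k-r\\ \nu\end{array}\right]$, yields $\hat f(B)=q^{\nu m}\left[\begin{array}{c}k-\rank B\\ \nu\end{array}\right]$. Summing over $B\in\C$ and grouping by rank then produces $a_\nu^k$ on the nose, proving $(\star)$.

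The main obstacle is the two-step identification behind $\hat f(B)$: first, showing that the trace-form annihilator of $\{A:\mathrm{colsp}(A)\subseteq W\}$ is exactly $\{B:\mathrm{colsp}(B)\subseteq W^\perp\}$, which uses the self-duality of the trace form on matrices; and second, tracking the $q$-binomial symmetries so that the final expression matches the definition of $a_\nu^k$ rather than a skew variant. Once both are handled, $(\star)$ and hence the stated recursion drop out without further ideas.
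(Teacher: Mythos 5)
First, a point of comparison: the paper does not prove this statement at all --- it is imported verbatim from Ravagnani \cite[Corollary~33]{Ravagnani} as a black box, so there is no internal argument to measure yours against; your proposal must stand on its own, and it does. Your proof is correct. The Poisson summation formula is stated with the right normalisation (with $\hat f(B)=\sum_A f(A)\psi(\Tr(AB^T))$ one indeed gets $\sum_{A\in\C^\perp}f(A)=\frac{1}{|\C|}\sum_{B\in\C}\hat f(B)$, using that $(A,B)\mapsto\Tr(AB^T)$ is symmetric and non-degenerate); the test function $f(A)=\left[\begin{smallmatrix}k-\rank A\\ \nu-\rank A\end{smallmatrix}\right]$ does count the $\nu$-dimensional $W\leq\Fq^k$ with $\mathrm{colsp}(A)\subseteq W$, so the left side of Poisson is $\sum_{j}B_j\left[\begin{smallmatrix}k-j\\ \nu-j\end{smallmatrix}\right]$; the annihilator identification is sound, since writing $A=PC$ gives $\Tr(PCB^T)=\Tr(C\,B^TP)$, which vanishes for all $C\in Mat(\nu\times m,\Fq)$ iff $P^TB=0$, i.e.\ iff $\mathrm{colsp}(B)\subseteq W^\perp$; and because $W\mapsto W^\perp$ is a dimension-complementing bijection, your symmetry step correctly yields $\hat f(B)=q^{\nu m}\left[\begin{smallmatrix}k-\rank B\\ \nu\end{smallmatrix}\right]$, whence the right side of Poisson is exactly $a_\nu^k$ (the truncation of the sum at $i=k-\nu$ is automatic since the Gaussian coefficient vanishes for $i>k-\nu$). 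Isolating the $j=\nu$ term of $(\star)$ gives the recursion, $B_0=1$ is the zero matrix, and $B_\nu=0$ for $\nu>k$ is trivial. For context: your route is in essence Delsarte's classical character-theoretic derivation, whereas Ravagnani's cited proof is deliberately character-free, obtaining the same identity $(\star)$ from an elementary double count comparing codewords of $\C$ with column space inside a fixed subspace $U$ against codewords of $\C^\perp$ with column space inside $U^\perp$ --- your inner character sum is precisely the character-theoretic proof of that counting lemma, summed over all $W$. The character approach is shorter once Poisson summation is granted; the elementary approach is self-contained and is the reason the paper cites Ravagnani's formulation rather than Delsarte's original association-scheme version.
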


This theorem tells us that the rank distribution of an $\Fq$-linear code $\C$ determines the rank distribution of the code $\C^\perp$. In Proposition \ref{weightrank}, we have seen that the rank distribution of a code determines the weight  distribution with respect to hyperplanes of the associated linear set. We obtain:


\begin{theorem} The weight distribution of the linear sets in $\L_\C$ determines the weight distribution of the linear sets in $\L_{\C^\perp}$.
\end{theorem}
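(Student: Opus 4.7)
\emph{Proof plan.} The plan is to chain together three ingredients already assembled in the paper: the Corollary following Proposition~\ref{weightrank}, which links the hyperplane weight distribution of any $L \in \L_\C$ to the rank distribution of $\Omega(\C)$ via $w_{k-2}(L) = n\cdot\mathbf{1} - rk(\Omega(\C))$; the elementary observation that $rk(\Omega(\C))$ carries the same data as the codeword rank distribution of the $\Fq$-linear rank-metric code $\C$; and the rank-metric MacWilliams identities (Theorem~\ref{ravagnani}).

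First I would invert the Corollary's relation to recover $rk(\Omega(\C))$ from $w_{k-2}(L)$ (the map is affine and obviously invertible). Next, since every projective point $\la f\ran \in \Omega(\C)$ is represented by exactly $q^n-1$ nonzero elements of $\C$ of that same rank, the vector $rk(\Omega(\C)) = (v_1,\ldots,v_n)$ and the codeword rank distribution $(A_i)_{i=0}^n$ of $\C$ determine one another: explicitly $A_0 = 1$ and $A_i = (q^n-1)\,v_i$ for $i \geq 1$. Applying Theorem~\ref{ravagnani} then produces the codeword rank distribution $(B_i)_{i=0}^n$ of the Delsarte dual $\C^\perp$, from which $rk(\Omega(\C^\perp))$ is recovered by the same bookkeeping. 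A final application of the Corollary to any $L' \in \L_{\C^\perp} \subset \PG(n-k-1,q^n)$ then yields its hyperplane weight distribution $w_{n-k-2}(L')$, completing the chain.

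No step is genuinely hard here: the argument is essentially a concatenation of two ``weight-distribution $\leftrightarrow$ rank-distribution'' dictionaries around the rank-metric MacWilliams identities, and all the needed translations have already been proved in the preceding sections. The only point worth flagging is terminological: the ``weight distribution'' appearing in this theorem must be read as the hyperplane weight distribution $w_{k-2}$, since it is $w_{k-2}$ (rather than $w_0$) that is directly tied to the codeword rank distribution of $\C$. In the case $k=2$ that motivates Section~\ref{mac} this is no restriction, as points and hyperplanes coincide in $\PG(1,q^n)$ and the two notions of weight distribution agree.
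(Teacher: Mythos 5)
Your proposal is correct and takes essentially the same route as the paper, which obtains the theorem by chaining the corollary $w_{k-2}(L)=n\cdot\mathbf{1}-rk(\Omega(\C))$ (following Proposition~\ref{weightrank}) with the MacWilliams recursion of Theorem~\ref{ravagnani}; you merely make explicit the dictionary $A_0=1$, $A_i=(q^n-1)v_i$ between the point rank distribution of $\Omega(\C)$ and the codeword rank distribution of $\C$, which the paper leaves implicit. Your terminological caveat---that ``weight distribution'' here must be read as the hyperplane weight distribution $w_{k-2}$, coinciding with $w_0$ when $k=2$---accurately reflects the paper's intent, as stated in the sentence immediately preceding the theorem.
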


 In the case that $\Omega(\C)$ is skew from $\Omega(\C)^\perp$, the linear sets $\Sigma/\Omega(\C)^\perp$ and $\Sigma/\Omega(\C)$ can be retrieved from one another:
$\Sigma/\Omega(\C)$ is obtained by projecting $\Sigma$ from $\Omega(\C)$ onto $\Omega(\C)^\perp$, whereas $\Sigma/\Omega(\C)^\perp$ is obtained by projecting $\Sigma$ from $\Omega(\C)^\perp$ onto $\Omega(\C)$. This shows the following:

\begin{corollary} Let $\C=\la f_1,f_2,\ldots,f_k\ra$. To switch between $L_{f_1,f_2,\ldots,f_k}\in \L_\C$ and  $L_{\la f_1,f_2,\ldots,f_k\ra_{q^n}^\perp}\in \L_{\C^\perp}$ in the case that $\Omega(\C)$ is skew from $\Omega(\C)^\perp$ just requires a switch between the subspace from which we are projecting and the subspace we are projecting onto.
\end{corollary}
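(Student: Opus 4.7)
The plan is to deduce this directly from Theorem \ref{thm:main} together with the involution property of Delsarte duality, and from the description of linear sets as projected subgeometries reviewed in Section \ref{proj}.

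First I would apply Theorem \ref{thm:main} to $\C$ itself: this gives $L_{f_1,\ldots,f_k} \cong \Sigma/\Omega(\C)^\perp$, which by Section \ref{proj} means that $L_{f_1,\ldots,f_k}$ is (projectively equivalent to) the projection of $\Sigma$ from the center $\Omega(\C)^\perp$ onto a target subspace complementary to $\Omega(\C)^\perp$. Under the hypothesis that $\Omega(\C)$ is skew from $\Omega(\C)^\perp$, we may take this complementary target to be $\Omega(\C)$ itself; indeed $\dim \Omega(\C)+\dim \Omega(\C)^\perp = (k-1)+(n-k-1) = n-2$, so skewness forces $\la \Omega(\C),\Omega(\C)^\perp\ra = \PG(n-1,q^n)$, and $\Omega(\C)$ is a valid target $\Lambda$ for the projection with center $\Lambda^*=\Omega(\C)^\perp$.

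Next I would apply Theorem \ref{thm:main} to the dual subspace $\C^\perp$, noting that $\C^\perp$ has $\Fn$-dimension $n-k$ and that $\Omega(\C^\perp)^\perp = \Omega(\C)$ by the involution property of $\perp$ recalled at the end of Section 2.4. Theorem \ref{thm:main} then yields $L_{\la f_1,\ldots,f_k\ra_{q^n}^\perp} \cong \Sigma/\Omega(\C^\perp)^\perp = \Sigma/\Omega(\C)$, which is the projection of $\Sigma$ from the center $\Omega(\C)$ onto a complementary target, and under the same skewness hypothesis we may take this target to be $\Omega(\C)^\perp$.

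Comparing the two descriptions gives the conclusion: the linear set in $\L_\C$ is obtained by projecting $\Sigma$ from $\Omega(\C)^\perp$ onto $\Omega(\C)$, while the linear set in $\L_{\C^\perp}$ is obtained by projecting $\Sigma$ from $\Omega(\C)$ onto $\Omega(\C)^\perp$, so passing from one to the other simply interchanges the center and the target of the projection, as claimed. The only subtle point in my view is to check that before invoking Theorem \ref{thm:main} for $\C^\perp$, the hypothesis $\Omega(\C^\perp)^\perp \cap \Sigma = \emptyset$ of that theorem is satisfied; but $\Omega(\C^\perp)^\perp = \Omega(\C)$, and this latter space is disjoint from $\Sigma$ because it is skew from $\Omega(\C)^\perp$ and $\Sigma \subseteq$ a suitable $\Omega(\C)^\perp$-containing configuration — more directly, the hypothesis that $\L_{\C^\perp}$ is a nonempty family of rank-$n$ linear sets is precisely what we need, and this follows from Proposition \ref{omegabot} applied to $\C^\perp$, which is automatic since $\C^\perp$ has no common nonzero root (being an MRD-type subspace whenever we are in the setting of interest). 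So the main — in fact only — thing to verify carefully is this disjointness hypothesis for $\C^\perp$, after which the corollary is immediate from Theorem \ref{thm:main}.
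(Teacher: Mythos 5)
Your argument is in substance the paper's own: the corollary is derived there from exactly the observation you make, namely that Theorem \ref{thm:main} applied to $\C$ gives $L_{f_1,\ldots,f_k}\cong \Sigma/\Omega(\C)^\perp$, while applying it to $\C^\perp$ together with $\Omega(\C^\perp)=\Omega(\C)^\perp$ (so $\Omega(\C^\perp)^\perp=\Omega(\C)$) gives $\Sigma/\Omega(\C)\in\L_{\C^\perp}$, and skewness lets each of $\Omega(\C)$, $\Omega(\C)^\perp$ serve as the target for the projection centered at the other; your dimension count $(k-1)+(n-k-1)=n-2$ showing that skew subspaces of these dimensions are complementary is correct and makes this explicit.

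One step in your write-up is wrongly justified, however. You correctly identify that the second application of Theorem \ref{thm:main} needs $\Omega(\C^\perp)^\perp\cap\Sigma=\Omega(\C)\cap\Sigma=\emptyset$, but your resolution fails: skewness of $\Omega(\C)$ and $\Omega(\C)^\perp$ does \emph{not} imply this disjointness (for instance, $\C$ can contain a rank-one map, putting a point of $\Omega(\C)$ on $\Sigma$, while still being skew from $\Omega(\C)^\perp$), and the corollary makes no MRD assumption, so the parenthetical ``being an MRD-type subspace whenever we are in the setting of interest'' is not available. The correct reading is that both disjointness conditions, $\Omega(\C)^\perp\cap\Sigma=\emptyset$ and $\Omega(\C)\cap\Sigma=\emptyset$, are implicit standing hypotheses of the statement: by Proposition \ref{omegabot} applied to $\C$ and to $\C^\perp$ together with Lemma \ref{linsetpol}, they are equivalent to $L_{f_1,\ldots,f_k}$ and $L_{\la f_1,\ldots,f_k\ran^\perp}$ genuinely being linear sets of rank $n$, which the corollary presupposes; they are moreover forced by the projection framework of Subsection \ref{proj}, where the center of a projection must be skew from the subgeometry $\Sigma$ for the projection $\Sigma/\Omega(\C)$ (respectively $\Sigma/\Omega(\C)^\perp$) to be defined at all. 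With that sentence replaced by this observation, your proof is complete and coincides with the paper's.
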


\begin{remark}
If $n=4$ and $k=2$, then $L_{f_1,f_2}$ and $L_{{ \la f_1,f_2\ra_{q^n}}^\perp}$ are both equivalence classes of linear sets of rank $4$ in $\PG(1,q^4)$. Such linear sets have been classified in \cite{Polverino}. 

Note that if $U$ is a four-dimensional $\Fq$-subspace of $V(2,q^4)$, given an $\Fq$-bilinear form on $V(2,q^4)$ we can define $U^\perp$, and hence can define the dual $L(U^\perp)$ of a linear set $L(U)$, as in \cite{olga}. However, if $U$ is such that  $L(U)\in L_\C$, it is not necessarily true that $L(U^\perp)\in L_{\C^\perp}$. Thus mapping $L_{f_1,f_2,\ldots,f_k}$ onto  $L_{\la  f_1,f_2, \ldots,f_k\ra^\perp}$  defines is a new operation {\sout{on equivalence classes}}, different from the dual operation.
\end{remark}

\subsection{An application for linear sets on a line}
If a linear set spans a line, then its weight distribution with respect to hyperplanes is determined by the weights of the points in the set. In this subsection, we will use this fact to deduce one of the MacWilliams identities in a geometric way. 

Studying the weight distribution of linear sets on a line was partially motivated by a problem that arose during the study of KM arcs. It was shown in \cite{KMarcs} that translation KM-arcs of type $2^i$ in $\PG(2,2^h)$ are equivalent to $i$-clubs in $\PG(1,2^h)$. These $i$-clubs are $\F_2$-linear sets that have exactly one point of weight $i$ and all others of weight $1$. The existence of $i$-clubs in $\PG(1,2^h)$ is known only for a few parameter values. The case $i=2$ is of particular interest as it is known that there are (small) values of $h$ for which no $2$-clubs in $\PG(1,2^h)$ exist; this is a topic requiring further investigation. In Theorem \ref{wt2} we give an equivalent condition for the existence of linear sets with a fixed number $N$ of points of weight $2$ and all others of weight $1$.

\begin{definition}A {\em proper} linear set is a linear set which contains more than one point.
\end{definition}
\begin{proposition}\label{BW} Let $L$ be a proper linear set of rank $n$, $n\geq 3$,  in $\PG(1,q^n)$ that is obtained by projecting a subgeometry $\Omega=\PG(n-1,q)$ from a subspace $\Pi=\PG(n-3,q^n)$ contained in $\PG(n-1,q^n)$. Let $R_2$ be the number of points of rank $2$ in $\Pi$ and let $W_i$ be the number of points of weight $i$ in $L$, then 
$$R_2=\sum_{i=2}^{n-1}W_i\left[\begin{array}{c}i\\2\end{array}\right].$$
\end{proposition}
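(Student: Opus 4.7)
The plan is to count rank-$2$ points of $\Pi$ by establishing a bijection between them and the $\Fq$-lines of the subgeometry $\Omega$ whose $\Fn$-span meets $\Pi$, then to group these lines by the point of $L$ onto which they project, and to apply Proposition~\ref{weightproj}.

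First, every rank-$2$ point $y$ of $\PG(n-1,q^n)$ lies in the $\Fn$-span of a unique $\Fq$-line $\ell_y \subseteq \Omega$: this $\ell_y$ is the projectivisation of the unique minimal $2$-dimensional $\Fq$-subspace of the $\Fq$-form of $\Omega$ whose $\Fn$-span contains $y$. Conversely, for any $\Fq$-line $\ell \subseteq \Omega$, its $\Fn$-span $\ell^*$ is a $\PG(1,q^n)$; because $\Pi\cap\Omega=\emptyset$, the intersection $\ell^*\cap\Pi$ cannot contain two points, since otherwise $\ell^*\subseteq\Pi$ would force the contradiction $\ell\subseteq\Omega\cap\Pi$. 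Hence $y\mapsto\ell_y$ yields a bijection between rank-$2$ points of $\Pi$ and $\Fq$-lines $\ell\subseteq\Omega$ satisfying $\ell^*\cap\Pi\ne\emptyset$.

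Next, whenever $\ell^*\cap\Pi\ne\emptyset$, a dimension count shows that $\langle\ell^*,\Pi\rangle$ is a hyperplane, so $\ell^*$ projects from $\Pi$ onto a single point $P_\ell\in L$; a short verification shows that $P_\ell=P$ if and only if $\ell\subseteq\langle P,\Pi\rangle\cap\Omega$. By Proposition~\ref{weightproj}, the $\Fq$-subspace $\omega_P:=\langle P,\Pi\rangle\cap\Omega$ is a subgeometry of $\Omega$ of projective dimension $wt_L(P)-1$, i.e.\ isomorphic to $\PG(wt_L(P)-1,q)$. Partitioning rank-$2$ points of $\Pi$ according to their associated point $P$ and using that $\PG(i-1,q)$ contains exactly $\left[\begin{array}{c}i\\2\end{array}\right]$ lines gives
\[
R_2 \;=\; \sum_{P\in L} \left[\begin{array}{c}wt_L(P)\\ 2\end{array}\right] \;=\; \sum_{i=2}^{n-1} W_i \left[\begin{array}{c}i\\ 2\end{array}\right],
\]
where terms with $wt_L(P)\le 1$ contribute zero and $wt_L(P)\le n-1$ follows from $\omega_P\subseteq\langle P,\Pi\rangle$ having projective dimension at most $n-2$.

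The main technical point is setting up the bijection in the first paragraph: the uniqueness of $\ell_y$ rests on the fact that the rank of a point equals the $\Fn$-dimension of the smallest subgeometry whose $\Fn$-span contains it, while the ``at most one point'' bound on $\ell^*\cap\Pi$ exploits the disjointness of $\Omega$ and $\Pi$. After that, the conclusion is a direct application of Proposition~\ref{weightproj} combined with the standard count of $\Fq$-lines in $\PG(i-1,q)$.
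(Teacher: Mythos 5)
Your proof is correct and follows essentially the same approach as the paper: associate to each point $P$ of weight $i$ the subspace $\langle P,\Pi\rangle\cap\Omega\cong\PG(i-1,q)$ via Proposition~\ref{weightproj}, count its $\left[\begin{array}{c}i\\2\end{array}\right]$ lines, and match each line's $\Fn$-extension with a rank-$2$ point of $\Pi$, using the disjointness of $\Pi$ and $\Omega$ to get distinctness. If anything, your explicit bijection is slightly more complete than the paper's argument, which proves injectivity but leaves surjectivity (that every rank-$2$ point of $\Pi$ lies on the extension of a unique subgeometry line and is therefore counted) implicit.
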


\begin{proof}If a point $P$ of $L$ has weight $i$, then $\langle P, \Pi\rangle$ meets $\Omega$ in an $(i-1)$-space $\tau$ of $\Omega$ (see Proposition \ref{weightproj}). The extension of this space $\tau$ to a subspace of $\PG(n-1,q^n)$ is denoted by $\bar{\tau}$. Let $\ell$ be a line of $\tau$ and let $\bar{\ell}$ be its extension (which lies in $\bar{\tau}$), then $\bar{\ell}$ meets the hyperplane $\Pi$ of $\langle P,\Pi\rangle$ in a point $Q$.  Note that there are $\left[\begin{array}{c}i\\2\end{array}\right]$ lines in $\tau$. 

This point $Q$ lies on an extended line of $\Omega$ and hence, has rank $2$. Now, suppose we have a different extended line $\bar{\ell'}$ that meets $\Pi$ in a point $Q'$. If $Q$ and $Q'$ would coincide, then $Q=Q'$ is the intersection point of $\bar{\ell}$ and $\bar{\ell'}$. But this implies that $\ell$ and $\ell'$ are contained in a plane, and hence, that the intersection point $Q$ is contained in $\Omega$. Since $Q$ is a point of $\Pi$, this in turn implies that $\Pi$ is not skew from $\Omega$, a contradiction. So we may conclude that all lines $\ell$ obtained in this fashion give rise to different points of rank $2$. As a point of weight $i$ gives rise to $\left[\begin{array}{c}i\\2\end{array}\right]$ points of rank $2$, we find in total
$\sum_{i=2}^{n-1}W_i\left[\begin{array}{c}i\\2\end{array}\right]$ points of rank $2$. Note that $W_n=0$ since the existence of a point of weight $n$ in $L$ would imply that $L$ equals that point, a contradiction since $L$ is a proper linear set.

\end{proof}
\begin{corollary}\label{wt1-2}Let $L$ be a proper linear set of rank $n$, $n\geq 3$,  in $\PG(1,q^n)$ that is obtained by projecting a subgeometry $\Omega=\PG(n-1,q)$ from a subspace $\Pi=\PG(n-3,q^n)$ contained in $\PG(n-1,q^n)$.
If $L$ has only points of weight $1$ and $2$, the number of points of weight $2$ in $L$ is the number of points of rank $2$ in $\Pi$.
\end{corollary}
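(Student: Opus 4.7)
The plan is to observe that this corollary is an immediate specialisation of Proposition \ref{BW}. Since the hypothesis stipulates that $L$ has only points of weight $1$ and $2$, we have $W_i = 0$ for every $i \geq 3$, so the sum
\[
R_2 \;=\; \sum_{i=2}^{n-1} W_i \left[\begin{array}{c} i\\ 2\end{array}\right]
\]
collapses to a single term. Noting that $\left[\begin{smallmatrix} 2\\ 2\end{smallmatrix}\right] = 1$, this term is exactly $W_2$.

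Since the statement therefore reduces to applying Proposition \ref{BW} and substituting $W_i = 0$ for $i \geq 3$, there is no real obstacle; the only thing to verify is that the hypotheses of Proposition \ref{BW} hold under the hypotheses of the corollary, which is immediate because both statements assume the same setup (a proper linear set of rank $n \geq 3$ obtained by projecting the subgeometry $\Omega = \PG(n-1,q)$ from $\Pi = \PG(n-3,q^n)$). One might also briefly remark that weight-$1$ points contribute nothing to the sum in Proposition \ref{BW} since the sum is indexed from $i=2$, consistent with the fact that a rank-$2$ point of $\Pi$ must come from a genuine projective line of $\Omega$, which requires a weight at least $2$ point of $L$.

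In short, the proof will consist of one or two sentences: invoke Proposition \ref{BW}, use the hypothesis to kill all terms with $i \geq 3$, and evaluate the remaining Gaussian coefficient to conclude $R_2 = W_2$.
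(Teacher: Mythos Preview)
Your proposal is correct and matches the paper's approach exactly: the paper states this as a corollary of Proposition \ref{BW} with no separate proof, so the intended argument is precisely the specialisation you describe---set $W_i=0$ for $i\geq 3$ and use $\left[\begin{smallmatrix}2\\2\end{smallmatrix}\right]=1$ to obtain $R_2=W_2$.
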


We can use Corollary \ref{wt1-2} to obtain the following geometric construction for linear sets having all but a few points of weight $1$ on a line.
\begin{theorem}\label{wt2}
There exists a linear set of rank $n$ in $\PG(1,q^n)$ containing $N$ points of weight $2$ and all other points of weight at most $1$ if and only if there exists a subspace of co-dimension two in $\PG(n-1,q^n)$ disjoint from $\Sigma$ and meeting $\Sigma_2$ in precisely $N$ points.
\end{theorem}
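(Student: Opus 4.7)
The plan is to prove both directions by using the projection construction of linear sets in $\PG(1,q^n)$ from Subsection \ref{proj}, combined with the counting equality of Proposition \ref{BW} (specialised in Corollary \ref{wt1-2}).

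For the forward direction, I would start with a linear set $L$ of rank $n$ in $\PG(1,q^n)$ having $N$ weight-2 points and all others of weight at most $1$. By the projected-subgeometry representation, one can write $L=p_{\Pi,\Lambda}(\Omega)$ where $\Omega\cong\PG(n-1,q)$ is an $\F_q$-subgeometry of $\PG(n-1,q^n)$ and $\Pi$ is a subspace of projective codimension $2$ skew from $\Omega$ and from the line $\Lambda$. Applying a collineation of $\PG(n-1,q^n)$, I may assume $\Omega=\Sigma$, the canonical subgeometry of rank-$1$ maps. The rank-$n$ hypothesis forces $\Pi\cap\Sigma=\emptyset$. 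Now Corollary \ref{wt1-2} applies (since $L$ has only weights $1$ and $2$) and gives that the number of rank-$2$ points in $\Pi$ equals $N$. Because $\Pi$ avoids $\Sigma=\Sigma_1$, the points of $\Pi\cap\Sigma_2$ are exactly those rank-$2$ points, so $|\Pi\cap\Sigma_2|=N$.

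For the reverse direction, given such a $\Pi$, I would form $L:=p_{\Pi,\Lambda}(\Sigma)$ for any line $\Lambda$ skew from $\Pi$. Since $\Pi\cap\Sigma=\emptyset$, $L$ is a linear set of rank $n$ in $\PG(1,q^n)$. The identity of Proposition \ref{BW} combined with $\Pi\cap\Sigma=\emptyset$ yields
\[
N=|\Pi\cap\Sigma_2|=\sum_{i\geq 2}W_i\,\left[\begin{array}{c}i\\2\end{array}\right].
\]
To conclude that $W_2=N$ and $W_i=0$ for $i\geq 3$, I would argue by contradiction: if $L$ had a point $P$ of weight $i\geq 3$, then $\tau_P:=\langle P,\Pi\rangle\cap\Sigma$ would contain an $\F_q$-plane $\pi_0\subset\Sigma$, and its $\F_{q^n}$-extension $\overline{\pi_0}$ would lie in the hyperplane $\langle P,\Pi\rangle$. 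A dimension count in that hyperplane shows $\overline{\pi_0}\cap\Pi$ contains a line $\overline{m}$ of $\PG(n-1,q^n)$. In $\overline{\pi_0}\cong\PG(2,q^n)$ this line $\overline{m}$ meets each of the $q^2+q+1$ extended $\F_q$-lines of $\pi_0$ in a distinct point, producing $q^2+q+1$ rank-$2$ points of $\Pi$ attributable to $P$ alone. Combined with the rank-$2$ contributions from the remaining weight-$\geq 2$ points, this gives the correspondence needed to pin down the only consistent weight distribution.

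The main obstacle is precisely this last step of the reverse direction: ruling out weight-$\geq 3$ points of $L$ from the sole hypothesis $|\Pi\cap\Sigma_2|=N$. The geometric engine for this is the uniqueness of the secant $\overline{\ell}$ of $\Sigma$ through a rank-$2$ point and the observation that rank-$2$ points lying inside $\overline{\pi_0}$ must come from $\F_q$-lines of $\pi_0$; these facts are already implicit in the proof of Proposition \ref{BW}, and applying them carefully to $\overline{\pi_0}\cap\Pi$ is where the real content sits.
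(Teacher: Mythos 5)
Your forward direction is correct and follows the paper's own (largely implicit) argument: the paper states Theorem \ref{wt2} without a separate proof, as a direct consequence of Corollary \ref{wt1-2}, and your steps --- represent $L$ as $p_{\Pi,\Lambda}(\Sigma)$, observe that rank $n$ forces $\Pi\cap\Sigma=\emptyset$, apply Corollary \ref{wt1-2}, and identify the rank-$2$ points of $\Pi$ with $\Pi\cap\Sigma_2$ since $\Pi$ avoids $\Sigma_1=\Sigma$ --- are exactly what that reduction requires.

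The reverse direction, however, contains a genuine gap, which you have honestly flagged but whose proposed repair cannot work. Proposition \ref{BW} applied to $L=p_{\Pi,\Lambda}(\Sigma)$ yields only the identity $N=\sum_{i\geq 2}W_i\left[\begin{array}{c}i\\2\end{array}\right]$, and your contradiction argument adds nothing to it: the $q^2+q+1$ rank-$2$ points you extract from a weight-$3$ point $P$ via the line $\overline{m}\subseteq\overline{\pi_0}\cap\Pi$ are not \emph{extra} points of $\Pi\cap\Sigma_2$ --- each lies on the extension of a line of $\pi_0=\tau_P$, so they are precisely the $\left[\begin{array}{c}3\\2\end{array}\right]$ points already counted in Proposition \ref{BW}, and no inconsistency with $|\Pi\cap\Sigma_2|=N$ arises. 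The weight distribution is therefore not pinned down, and in fact it cannot be. Concretely, take $n=4$ and $\Pi=\Omega(\la x,\Tr(x)\ra_{q^4})^\perp$, a line of $\PG(3,q^4)$: since $x$ and $\Tr$ have trivial common kernel, Proposition \ref{omegabot} gives $\Pi\cap\Sigma=\emptyset$, and by Theorem \ref{thm:main} the projection of $\Sigma$ from $\Pi$ is the club $L_{x,\Tr}$, which has one point of weight $3$ and $q^3$ points of weight $1$. Here $\langle P_0,\Pi\rangle$ is a plane equal to $\overline{\tau}_{P_0}$, so $\Pi$ is a line of $\overline{\tau}_{P_0}$ disjoint from the subgeometry plane $\tau_{P_0}$, meeting each of its $q^2+q+1$ extended lines in a distinct point; hence $|\Pi\cap\Sigma_2|=q^2+q+1$ exactly. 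Yet for $N=q^2+q+1$ no linear set of rank $4$ in $\PG(1,q^4)$ with $N$ points of weight $2$ and all others of weight at most $1$ exists, since counting nonzero vectors would force $W_1(q-1)=q^4-1-(q^2+q+1)(q^2-1)=-(q^3-q)<0$. So the implication you are trying to prove fails as literally stated; it becomes valid only under the additional hypothesis that the projected set has no points of weight at least $3$ (equivalently, that no hyperplane through $\Pi$ meets $\Sigma$ in a subspace of projective dimension at least $2$), which is exactly the role played by the hypothesis of Corollary \ref{wt1-2}. Your instinct that the real content sits in this step was sound; the correct conclusion is that the step is not merely difficult but impossible without strengthening the hypothesis on $\Pi$, a caveat the paper's statement glosses over.
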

Unfortunately, it does not appear to be easy to determine the possibilities for the intersection of a subspace with $\Sigma_2$. However, this does provide an alternative approach which may be of benefit, as $\Sigma_2$ is a more convenient variety to work with than $\Sigma_{n-2}$.

\subsection{Using MacWilliams identities to prove Proposition \ref{BW}}

\begin{lemma} Let $\C=\la f_1,f_2\ra$ such that $L\in \L_\C$ is a proper linear set of rank $n$ in $\PG(1,q^n)$, and let $A_i$ denote the number of vectors of rank $i$ in $\C$, then we have that $$\sum_{i=1}^{n-1}A_i\left[\begin{array}{c}n-i\\ 1\end{array}\right]=(q^n-1)\left[\begin{array}{c}n\\ 1\end{array}\right].$$
\end{lemma}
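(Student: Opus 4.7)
The plan is to recognise that the asserted identity is exactly what the MacWilliams recursion of Theorem \ref{ravagnani} produces in the case $\nu=1$, once one knows that the Delsarte dual $\C^\perp$ contains no nonzero elements of rank one.

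First I would establish this rank-one disjointness. Because $L_{f_1,f_2}\in \L_\C$ is a proper linear set of rank $n$ in $\PG(1,q^n)$, Lemma \ref{linsetpol} gives $\ker(f_1)\cap\ker(f_2)=\{0\}$, and by Proposition \ref{omegabot} this is equivalent to $\Omega(\C)^\perp\cap \Sigma=\emptyset$. Since $\Omega(\C^\perp)=\Omega(\C)^\perp$ (as recalled at the end of Section 2.4) and $\Sigma$ is precisely the set of projective points of $\PG(n-1,q^n)$ represented by rank-one linearised polynomials, this says exactly that $\C^\perp$ contains no element of rank one, i.e.\ $B_1=0$ in the notation of Theorem \ref{ravagnani}.

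Next I would apply Theorem \ref{ravagnani} with $k=m=n$, viewing $\C$ as a $2n$-dimensional $\Fq$-subspace of $\mathrm{Mat}(n\times n,\Fq)\cong V$, so that $|\C|=q^{2n}$. The same kernel condition forces $A_0=1$. Writing out the recursion for $\nu=1$,
\begin{align*}
0 \;=\; B_1 \;=\; \frac{1}{q^n}\sum_{i=0}^{n-1} A_i\left[\begin{array}{c} n-i\\ 1\end{array}\right] \;-\; \left[\begin{array}{c} n\\ 1\end{array}\right].
\end{align*}
Isolating the $i=0$ term on the right (which contributes $A_0\left[\begin{array}{c} n\\ 1\end{array}\right]=\left[\begin{array}{c} n\\ 1\end{array}\right]$) and multiplying through by $q^n$ gives precisely $\sum_{i=1}^{n-1}A_i\left[\begin{array}{c} n-i\\ 1\end{array}\right]=(q^n-1)\left[\begin{array}{c} n\\ 1\end{array}\right]$.

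There is no serious obstacle here. The only two essentially bookkeeping points are to make sure that the duality used in Proposition \ref{omegabot} matches the one used in Theorem \ref{ravagnani} (guaranteed by the discussion in Section 2.4, since $\C$ is an $\Fn$-subspace), and to remember the $A_0$ contribution when extracting the desired sum from the $\nu=1$ recursion.
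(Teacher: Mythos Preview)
Your proof is correct, but it runs in the opposite direction to the paper's. The paper proves this lemma by a direct geometric count: representing $L$ via a Desarguesian spread, the subspaces of $\pi\cong\PG(n-1,q)$ cut out by spread elements partition $\pi$, giving $\sum_i W_i\left[\begin{smallmatrix}i\\1\end{smallmatrix}\right]=\left[\begin{smallmatrix}n\\1\end{smallmatrix}\right]$; translating $W_i=A_{n-i}/(q^n-1)$ (and using properness to force $W_n=0$) yields the stated identity. Only afterwards does the paper plug this into the $\nu=1$ case of Theorem~\ref{ravagnani} to deduce $B_1=0$. You instead obtain $B_1=0$ first, directly from Proposition~\ref{omegabot}, and then read the same $\nu=1$ MacWilliams relation backwards to extract the identity. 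Both arguments are just the single linear equation $B_1=q^{-n}\sum_{i\ge 0}A_i\left[\begin{smallmatrix}n-i\\1\end{smallmatrix}\right]-\left[\begin{smallmatrix}n\\1\end{smallmatrix}\right]$, so either side determines the other; the paper's route keeps this lemma independent of the MacWilliams machinery (fitting the section's aim of recovering Proposition~\ref{BW} from MacWilliams, with the geometric input isolated here), whereas your route is shorter but front-loads the duality. One cosmetic point: $A_0=1$ holds automatically (only the zero map has rank $0$) and does not require the kernel condition.
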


\begin{proof}

Consider the representation of $L$ as the set of elements of a Desarguesian spread meeting an $(n-1)$-dimensional projective space $\pi$. The elements of $\D$ intersecting $\pi$ correspond to the points of $L$. More precisely, we have that a point of $L$ has weight $i$ if and only if the corresponding spread element meets $\pi$ in an $(i-1)$-dimension projective space. The $(i-1)$-spaces corresponding to the points of $L$ form a partition of $\pi$, so we have
$$\sum_{i=1}^{n}W_i\left[\begin{array}{c}i\\ 1\end{array}\right]=\left[\begin{array}{c}n\\ 1\end{array}\right],$$
where $W_i$ is the number of points of weight $i$ in $L$.
First note that $W_n=0$ since $L$ is proper. Secondly, we have seen that a point of weight $i$ in $L$ corresponds to a point of rank $n-i$ in $\Omega(\C)$. 
So the number of points with weight $W_i$ is the number of points of rank $n-i$ in $\Omega(\C)$. This number in turns equals $\frac{A_{n-i}}{q^n-1}$ as every point gives rise to $(q^n-1)$ vectors in $\C$. So we obtain that 
$$\sum_{i=1}^{n-1}\frac{A_{n-i}}{q^n-1}\left[\begin{array}{c}i\\ 1\end{array}\right]=\left[\begin{array}{c}n\\ 1\end{array}\right],$$ or equivalently
 $$\sum_{i=1}^{n-1}A_i\left[\begin{array}{c}n-i\\ 1\end{array}\right]=(q^n-1)\left[\begin{array}{c}n\\ 1\end{array}\right].$$
\end{proof}

\begin{corollary} \label{An-1}$A_{n-1}=\frac{(q^n-1)^2}{q-1}-\sum_{i=0}^{n-2}A_i\left[\begin{array}{c}n-i\\ 1\end{array}\right]+\left[\begin{array}{c}n\\ 1\end{array}\right]$
\end{corollary}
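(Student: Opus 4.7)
The claim is essentially an algebraic rearrangement of the previous lemma, so my plan is to derive it in three short steps.

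First, I would isolate $A_{n-1}$ from the identity
\[
\sum_{i=1}^{n-1}A_i\left[\begin{array}{c}n-i\\ 1\end{array}\right]=(q^n-1)\left[\begin{array}{c}n\\ 1\end{array}\right],
\]
established in the preceding lemma. Since the Gaussian coefficient at $i=n-1$ is $\left[\begin{array}{c}1\\1\end{array}\right]=1$, this gives
\[
A_{n-1}=(q^n-1)\left[\begin{array}{c}n\\ 1\end{array}\right]-\sum_{i=1}^{n-2}A_i\left[\begin{array}{c}n-i\\ 1\end{array}\right].
\]

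Next, I would reconcile the sum with the one appearing in the corollary (which runs from $i=0$). Since $A_0=1$ counts the zero map (the only linear map of rank $0$), the $i=0$ contribution is exactly $\left[\begin{array}{c}n\\1\end{array}\right]$. Thus adding and subtracting this term lets me rewrite the sum as
\[
\sum_{i=1}^{n-2}A_i\left[\begin{array}{c}n-i\\ 1\end{array}\right]=\sum_{i=0}^{n-2}A_i\left[\begin{array}{c}n-i\\ 1\end{array}\right]-\left[\begin{array}{c}n\\ 1\end{array}\right].
\]

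Finally, I would use $\left[\begin{array}{c}n\\1\end{array}\right]=\tfrac{q^n-1}{q-1}$ to convert $(q^n-1)\left[\begin{array}{c}n\\1\end{array}\right]$ into $\tfrac{(q^n-1)^2}{q-1}$ and substitute, obtaining the stated expression. There is no real obstacle here beyond keeping track of indices and the $A_0$ convention; the entire argument is bookkeeping on top of the lemma.
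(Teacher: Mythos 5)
Your derivation is correct and is exactly the argument the paper intends: the corollary is stated without proof as an immediate rearrangement of the preceding lemma, using $\left[\begin{array}{c}1\\1\end{array}\right]=1$, the convention $A_0=1$, and $\left[\begin{array}{c}n\\1\end{array}\right]=\frac{q^n-1}{q-1}$. Your bookkeeping (isolating $A_{n-1}$, extending the sum to $i=0$, converting $(q^n-1)\left[\begin{array}{c}n\\1\end{array}\right]$ to $\frac{(q^n-1)^2}{q-1}$) is exactly right.
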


\begin{lemma} Let $A_i$ be the number of vectors of rank $i$ in $\C=\la f_1,f_2\ra$ and let $B_i$ denote the number of vectors of rank $i$ in $\C^\perp$. Then $B_1$=0.
\end{lemma}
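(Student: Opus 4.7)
The plan is to observe that the hypothesis that $L$ is a proper linear set of rank $n$ forces the projective subspace $\Omega(\C)^\perp$ to miss the subgeometry $\Sigma$ entirely, and then to translate this back to a statement about rank-$1$ vectors in $\C^\perp$ using the identity $\Omega(\C^\perp) = \Omega(\C)^\perp$ noted at the end of Section 2.5.

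More concretely, first I would use Lemma \ref{linsetpol} together with Proposition \ref{omegabot}: since $L = L_{f_1,f_2}$ is of rank $n$, we have $\ker(f_1)\cap\ker(f_2)=\{0\}$, which is precisely the condition $\{x\in\Fn : g(x)=0 \ \forall g\in\C\}=\{0\}$, and this is equivalent to $\Omega(\C)^\perp\cap\Sigma=\emptyset$. In other words, the $(n-3)$-dimensional $\Fn$-subspace $\Omega(\C)^\perp$ of $\PG(n-1,q^n)$ contains no points of rank $1$.

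Next I would combine this with $\Omega(\C^\perp)=\Omega(\C)^\perp$ to conclude that $\Omega(\C^\perp)$ likewise contains no points of rank $1$. Since the rank of a linearised polynomial is invariant under multiplication by a nonzero scalar from $\Fn$, every nonzero vector $g\in\C^\perp$ defines a point $\langle g\rangle_{q^n}\in\Omega(\C^\perp)$ whose rank equals $\rank(g)$. Hence no nonzero $g\in\C^\perp$ can have rank $1$, which is exactly the statement $B_1=0$.

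There is no real obstacle here: the argument is a one-line consequence of Proposition \ref{omegabot} plus the dictionary $\Omega(\C^\perp)=\Omega(\C)^\perp$. The only thing worth spelling out is that the rank-$n$ hypothesis on $L$ enters solely through the intersection condition $\Omega(\C)^\perp\cap\Sigma=\emptyset$, so the proof does not require any explicit use of the MacWilliams recursion from Theorem \ref{ravagnani}.
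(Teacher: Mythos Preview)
Your argument is correct: the rank-$n$ hypothesis on $L$ gives $\ker(f_1)\cap\ker(f_2)=\{0\}$, which by Proposition~\ref{omegabot} says $\Omega(\C)^\perp\cap\Sigma=\emptyset$, and since $\Omega(\C^\perp)=\Omega(\C)^\perp$ this immediately forces $B_1=0$.

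This is, however, a genuinely different route from the paper's. The paper proves the lemma by a direct computation with the MacWilliams recursion of Theorem~\ref{ravagnani}: it writes out the formula for $B_1$ in terms of the $A_i$'s and then substitutes the expression for $A_{n-1}$ obtained in Corollary~\ref{An-1} (which itself comes from the partition count in the Desarguesian-spread picture) to see that everything cancels. Your proof is shorter and conceptually cleaner, and it makes transparent that $B_1=0$ is really just a restatement of the disjointness condition $\Omega(\C)^\perp\cap\Sigma=\emptyset$ already built into the setup. The paper's computation, on the other hand, is deliberate: the whole purpose of Section~\ref{mac}.3 is to show that the MacWilliams identities reproduce the geometric content of Proposition~\ref{BW}, so verifying $B_1=0$ via Theorem~\ref{ravagnani} and Corollary~\ref{An-1} is a consistency check between the two frameworks rather than the most efficient proof. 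Your argument would sit more naturally as a remark explaining \emph{why} the MacWilliams computation was bound to give zero.
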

\begin{proof} We use Ravagnani's formulae from Theorem \ref{ravagnani} with $k=m=n$, $|\C|=q^{2n}$ and $\nu=1$. We find
$$B_1=\frac{q^n}{q^{2n}}\left(\sum_{i=0}^{n-1}A_i\left[\begin{array}{c}n-i\\ 1\end{array}\right]\right)-B_0\left[\begin{array}{c}n\\ 1\end{array}\right],$$or
$$B_1=\frac{1}{q^{n}}\left(\sum_{i=0}^{n-3}A_i\left[\begin{array}{c}n-i\\ 1\end{array}\right]\right)+\frac{1}{q^{n}}A_{n-2}\left[\begin{array}{c}2\\ 1\end{array}\right]+\frac{1}{q^{n}}A_{n-1}\left[\begin{array}{c}1\\ 1\end{array}\right]-\left[\begin{array}{c}n\\ 1\end{array}\right].$$

We sustitute $A_{n-1}$ for its value found in Corollary \ref{An-1}, that is 
$$A_{n-1}=\frac{(q^n-1)^2}{q-1}-\sum_{i=0}^{n-3}A_i\left[\begin{array}{c}n-i\\ 1\end{array}\right]-A_{n-2}\left[\begin{array}{c}2\\ 1\end{array}\right]+\left[\begin{array}{c}n\\ 1\end{array}\right].$$

Plugging this in the equation for $B_1$, we find that $B_1=0$.

\end{proof}

Now we will only use Theorem \ref{ravagnani} to prove:
\begin{lemma} \label{la}If $A_i$ is the rank distribution of $\C=\la f_1,f_2\ra$, then $$B_2=\sum_{i=1}^{n-2}A_i\left[\begin{array}{c}n-i\\ 2\end{array}\right].$$
\end{lemma}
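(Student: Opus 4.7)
The plan is to apply Ravagnani's recursion (Theorem \ref{ravagnani}) in the specialisation $k = m = n$, $|\C| = q^{2n}$, with $\nu = 2$, and then substitute the two already-known values $B_0 = 1$ and $B_1 = 0$ (the latter coming from the immediately preceding lemma).

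First I would compute the auxiliary quantity $a_2^n$. By definition,
\[
a_2^n = \frac{q^{2n}}{|\C|}\sum_{i=0}^{n-2} A_i \left[\begin{array}{c} n-i\\ 2\end{array}\right]
= \sum_{i=0}^{n-2} A_i \left[\begin{array}{c} n-i\\ 2\end{array}\right],
\]
since $|\C| = q^{2n}$ cancels the $q^{2n}$ in the numerator.

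Next, I would write down the recursion for $\nu = 2$, which reads
\[
B_2 = a_2^n - B_0 \left[\begin{array}{c} n\\ 2\end{array}\right] - B_1 \left[\begin{array}{c} n-1\\ 1\end{array}\right].
\]
Plugging in $B_0 = 1$ and the fact that $B_1 = 0$ (which has been established just above), this reduces to
\[
B_2 = \sum_{i=0}^{n-2} A_i \left[\begin{array}{c} n-i\\ 2\end{array}\right] - \left[\begin{array}{c} n\\ 2\end{array}\right].
\]
Finally, I would observe that the $i = 0$ term in the sum is $A_0 \left[\begin{array}{c} n\\ 2\end{array}\right] = \left[\begin{array}{c} n\\ 2\end{array}\right]$ (since the zero codeword is the unique element of rank $0$, so $A_0 = 1$), which exactly cancels the subtracted term, leaving
\[
B_2 = \sum_{i=1}^{n-2} A_i \left[\begin{array}{c} n-i\\ 2\end{array}\right],
\]
as required.

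There is no real obstacle here: the statement is a direct bookkeeping consequence of the MacWilliams-type recursion once $B_1 = 0$ is known, and the whole calculation amounts to identifying and cancelling the $i = 0$ term against the $B_0$ correction. The only subtlety is remembering the convention $A_0 = 1$ (the single rank-zero element of $\C$), without which the cancellation would not occur.
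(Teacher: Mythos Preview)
Your proof is correct and follows exactly the same route as the paper: apply Ravagnani's recursion at $\nu=2$ with $k=m=n$ and $|\C|=q^{2n}$, use $B_0=1$ and the previously established $B_1=0$, and then cancel the $i=0$ term (using $A_0=1$) against the $B_0\left[\begin{smallmatrix}n\\2\end{smallmatrix}\right]$ correction. If anything, you are slightly more explicit than the paper in justifying the cancellation of the $i=0$ term.
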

\begin{proof}
We have that $B_2=a_2^n-B_0\left[\begin{array}{c}n\\ 2\end{array}\right]-B_1\left[\begin{array}{c}n-1\\ 1\end{array}\right]$. Now we have seen that $B_1=0$ in the previous lemma.
Further, we have that $a_2^n=\sum_{i=0}^{n-2}A_i\left[\begin{array}{c}n-i\\ 2\end{array}\right], $ so
$$B_2=\sum_{i=0}^{n-2}A_i\left[\begin{array}{c}n-i\\ 2\end{array}\right]-\left[\begin{array}{c}n\\ 2\end{array}\right]=\sum_{i=1}^{n-2}A_i\left[\begin{array}{c}n-i\\ 2\end{array}\right].$$
\end{proof}

\begin{remark}
We have seen in Proposition \ref{BW} that $$R_2=\sum_{i=2}^{k-1}W_i\left[\begin{array}{c}i\\2\end{array}\right].$$ To retrieve this result, put $R_2=A_2/(q^n-1)$, $W_i=A_{n-i}/(q^n-1)$ and $k=n$ in Lemma \ref{la}.

\end{remark}

\begin{remark} The ideas developed in this section can be used to explicitely compute the weight distributions of the companions of linear sets with prescribed weight distributions. For example, if $L_{f_1,f_2}$ is a linear set of rank $5$ in $\PG(1,q^5)$ that has 1 point of weight $2$ and all others of weight $1$, then the rank distribution of $\la f_1,f_2\ra$ is as follows:

\begin{align*}
A_0&=1\\
A_1&=0\\
A_2&=0\\
A_3&=q^5-1\\
A_4&=(q^5-1)\frac{q^n-q^2}{q-1}\\
A_5&=(q^5-1)\left(q^n-\frac{q^n-q^2}{q-1}\right)
\end{align*}
and we have that the rank distribution of $\Omega(\C)$ is given by the vector $$rk(\Omega(\C))=\left(0,0,1,\frac{q^n-q^2}{q-1},\left(q^n-\frac{q^n-q^2}{q-1}\right)\right).$$ This says that on the line $\la f_1,f_2\ran$, there is one point of rank $3$, $\frac{q^n-q^2}{q-1}$ points of rank $4$ and the remaining $q^n-\frac{q^n-q^2}{q-1}$ ones are of rank $5$.

Applying Theorem \ref{ravagnani} gives us
\begin{align*}
B_0&=1\\
B_1&= 0\\
B_2&=q^5-1\\
B_3&=(q^5-1)(q^6 + q^5 + 2q^4 + 2q^3 + q^2)\\
B_4&=(q^5-1)(q^9 + q^8 - q^6 - 2q^5
    - 3q^4 - 2q^3 - q^2)\\
B_5&=(q^5-1) ( q^{10} - q^9 - q^8 + 2q^5 + q^4).
\end{align*}
and we have that the rank distribution of $\Omega(\C^\bot)$ is given by the vector $$rk(\Omega(\C^\bot))=(0,
1,
q^6 + q^5 + 2q^4 + 2q^3 + q^2,
q^9 + q^8 - q^6 - 2q^5
    - 3q^4 - 2q^3 - q^2,
 q^{10} - q^9 - q^8 + 2q^5 + q^4).$$
We conclude that  $L_{h_1,h_2,h_3}$ with $\la h_1,h_2,h_3\ran=\la f_1,f_2\ran^\bot$ is a linear set in $\PG(2,q^5)$ with the following weight distribution with respect to hyperplanes; i.e., lines:
$$w_{L_{h_1,h_2,h_3}}=( q^{10} - q^9 - q^8 + 2q^5 + q^4,q^9 + q^8 - q^6 - 2q^5
    - 3q^4 - 2q^3 - q^2,q^6 + q^5 + 2q^4 + 2q^3 + q^2,1,0).$$
We find a unique line that has weight $3$ with respect to $L_{h_1,h_2,h_3}$ while all other lines have weight $1$ or $2$ with respect to $L_{h_1,h_2,h_3}$.

\end{remark}


%
%
%
%
%


\end{document}